\documentclass[11pt,reqno]{amsart}
\usepackage{amsmath}
\usepackage{float}
\usepackage{geometry}
\usepackage{etoolbox}
\usepackage{amscd, amsfonts, amssymb, graphicx, color}
\usepackage{url}
\usepackage{tikz-cd}
\usepackage{hyphenat}
\usepackage{mathtools}
\usepackage[colorlinks=true]{hyperref}
\usepackage{titlesec}
\titleformat{\section}
{\normalfont\bfseries\large}{\thesection}{1em}{}
\makeatletter
\patchcmd\maketitle
{\uppercasenonmath\shorttitle}
{}
{}{}
\patchcmd\maketitle
{\@nx\MakeUppercase{\the\toks@}}
{\the\toks@}
{}
{}{}
\patchcmd\@settitle{\uppercasenonmath\@title}{\Large}{}{}
\patchcmd\@setauthors
{\MakeUppercase{\authors}}
{\authors}
{}{}
\makeatother
\textheight 22.5truecm \textwidth 14.5truecm
\setlength{\oddsidemargin}{0.35in}\setlength{\evensidemargin}{0.35in}
\setlength{\topmargin}{-.5cm}
\hyphenation{he-lio-trope opos-sum}
\setcounter{MaxMatrixCols}{10}

\newtheorem{theorem}{Theorem}[section]

\newtheorem{lemma}{Lemma}[section]
\newtheorem{definition}{Definition}[section]
\newtheorem{remark}{Remark}[section]
\newtheorem{example}{Example}[section]
\newtheorem{corollary}{Corollary}[section]
\newtheorem{Proof of Theorem}{Proof}
\newtheorem{proposition}{Proposition}[section]

\hypersetup{urlcolor=blue, citecolor=red, linkcolor= blue}

\def\#{\sharp}

\makeatletter
\renewcommand\subsection{\@startsection{subsection}{2}%
	\z@{.7\linespacing\@plus\linespacing}{.5\linespacing}%
	{\normalfont\bfseries}}
\makeatother

\begin{document}
	\title[On the $A$-$q$-Numerical Range of Operators in Semi-Hilbertian Spaces]{On the $A$-$q$-Numerical Range of Operators in Semi-Hilbertian Spaces}
 \author{Jyoti Rani}
	\address{Department of mathematics, Indian Institute of Technology Bhilai, Durg, India 491002}
	\email{jyotir@iitbhilai.ac.in}   
	
	\author{Arnab Patra}
	\address{Department of mathematics, Indian Institute of Technology Bhilai, Durg, India 491002}
	\email{arnabp@iitbhilai.ac.in}

 \author{Riddhick Birbonshi}
	\address{Department of mathematics, Jadavpur University, Kolkata 700032, West Bengal, India}
	\email{riddhick.math@gmail.com}
    
	\subjclass[2020]{ Primary 46C05. Secondary 15A60; 47A05; 47A12.}
	
	\keywords{$A$-$q$-numerical range, $A$-spectrum, Semi-Hilbertian space.}

\begin{abstract} 
This study investigates the $A$-$q$-numerical range of an operator within the framework of semi-Hilbertian spaces. Several fundamental properties of the $A$-$q$-numerical range are established, including spectral inclusion results and a disk union formula. Bounds for the $A$-$q$-numerical radius are derived, extending and generalizing previously known results. Finally, the notion of $A$-nilpotent operator is introduced, and it is shown that the $A$-$q$-numerical range of an $A$-nilpotent operator with index $2$ is a disk (open or closed) in the complex plane.
	\end{abstract}
	
	\maketitle

\section{Introduction and Preliminaries}
Let $\mathcal{H}$ be a separable, infinite-dimensional complex Hilbert space endowed with the inner product $\langle .,. \rangle$ and $\|.\|$ be the norm induced by the inner product. The $C^*$-algebra of all bounded linear operators on $\mathcal{H}$ is denoted by $\mathcal{B}(\mathcal{H})$. Let $A \in \mathcal{B}(\mathcal{H})$ be a positive operator, i.e., $\langle Ax,x \rangle \ge 0$ for all $x \in \mathcal{H}$. For any $T \in \mathcal{B}(\mathcal{H}) $, the range and null space of $T$ are denoted by $R(T)$ and $ N(T)$ respectively. The orthogonal projection of $\mathcal{H}$ onto $\overline{{R}(A)}$ is denoted by $P_{\overline{{R}(A)}}$. The operator $A$ naturally induces a semi-inner product $\langle \cdot, \cdot \rangle_A$ on $\mathcal{H}$, given by $\langle x, y \rangle_A = \langle Ax, y \rangle$ for all $x, y \in \mathcal{H}$. The corresponding semi-norm is given by $\|x\|_A = \sqrt{\langle x, x \rangle_A}$ for all $x \in \mathcal{H}$. The space $\mathcal{H}$ equipped with this semi-inner product is referred to as a semi-Hilbertian space. The semi-inner product $\langle \cdot, \cdot \rangle_A$ induces an inner product $[\cdot, \cdot]$ on the quotient space $\mathcal{H}/N(A)$, defined by $[\bar{x}, \bar{y}] = \langle x, y \rangle_A$ for all $\bar{x}, \bar{y} \in \mathcal{H}/N(A)$. The completion of $(\mathcal{H}/N(A), [\cdot, \cdot])$ is isometrically isomorphic to the Hilbert space $\textbf{R}(A^{1/2}) := ({R}(A^{1/2}), (\cdot, \cdot))$, where the inner product $(\cdot, \cdot)$ is given by
\[
(A^{1/2} x, A^{1/2} y) = \langle P_{\overline{R(A)}} x, P_{\overline{R(A)}}y \rangle.
\]
for all $x,y \in \mathcal{H}$ \cite{feki2020tuples}.
Given $T \in \mathcal{B}(\mathcal{H})$, if there exists a constant $c > 0$ such that $\|Tx\|_A \leq c\|x\|_A$ for all $x \in \overline{{R}(A)}$, then the $A$-operator semi-norm of $T$, denoted by $\|T\|_A$, is defined as
\[
\|T\|_A = \sup_{x \in \overline{{R}(A)}, x \neq 0} \frac{\|Tx\|_A}{\|x\|_A}.
\]
Let $\mathcal{B}_{A^{1/2}}(\mathcal{H})$ be the set of all operators admitting $A^{\frac{1}{2}}$-adjoint. By Douglas Theorem \cite{douglus}, we have
\begin{equation*}
	\mathcal{B}_{A^{1/2}}(\mathcal{H}) = \{ T \in \mathcal{B}(\mathcal{H}) : \exists c > 0 \text{ such that } \|Tx\|_A \leq c\|x\|_A \ \forall x \in \mathcal{H} \}.
\end{equation*}
An operator $T \in {B}_{A^{1/2}}(\mathcal{H})$ is known as an $A$-bounded operator, and if $T \in \mathcal{B}_{A^{1/2}}(\mathcal{H})$, then $T(N(A)) \subseteq N(A)$ \cite{sen2024note}.
 An operator $T \in \mathcal{B}_{A^{1 / 2}}(\mathcal{H})$ is said to be $A$-invertible in $\mathcal{B}_{A^{1 / 2}}(\mathcal{H})$ if there exists $S \in \mathcal{B}_{A^{1 / 2}}(\mathcal{H})$ such that $A T S=A S T=A$ and $S$ is called $A$-inverse of $T$. For more details, see \cite{baklouti2022spectral}. 

For $T \in \mathcal{B}_{A^{1 / 2}}(\mathcal{H})$, $A$-point spectrum $\sigma_{A_p}(T)$, $A$-approximate point spectrum $\sigma_{A_{\text{app}}}(T)$, and $A$-spectrum $\sigma_{A}(T)$ are defined as
\begin{align*}
  \sigma_{A_p}(T) &=\left\{\lambda \in \mathbb{C} : \exists~ x (\neq 0) \in \overline{R(A)} \text{ such that } P_{\overline{R(A)}} T x = \lambda x \right\},\\
\sigma_{A_{\text{app}}}(T) &= \left\{\lambda \in \mathbb{C} : \exists \left\{x_n\right\} \subseteq \overline{R(A)} \text{ with } \left\|x_n\right\|_A = 1 \text{ such that } \left\|(T - \lambda I) x_n\right\|_A \rightarrow 0 \right\}, ~\text{and}  \\
\sigma_A(T)&=\left\{\lambda \in \mathbb{C}:(T-\lambda I) \text { is not } A \text {-invertible in } \mathcal{B}_{A^{1 / 2}}(\mathcal{H})\right\}.
\end{align*}

If $T \in \mathcal{B}_{A^\frac{1}{2}}(\mathcal{H})$, then the $A$-spectral radius of $T$ is defined as
\begin{equation}\label{srf}
 r_A(T)=\lim _{n \rightarrow \infty}\left\|T^n\right\|_A^{1 / n}   
\end{equation}
\cite[Theorem 1]{feki2020spectral}.
$
$ Observe that the Moore-Penrose inverse $A^{\dagger}$ of $A$ satisfies the following identities (see \cite{engl1981new})
$$
A^{\dagger} A = \left(A^{1 / 2}\right)^{\dagger} A^{1 / 2} = P_{\overline{R(A)}}, \quad A P_{\overline{R(A)}} = A \text{ and } A^{1 / 2} P_{\overline{R(A)}} = P_{\overline{R(A)}} A^{1 / 2} = A^{1 / 2},
$$
and hence the $A$-point spectrum of $T$ can also be expressed as
$$
\begin{aligned}
	\sigma_{A_p}(T) & = \left\{ \lambda \in \mathbb{C} : \exists~ x (\neq 0) \in \overline{R(A)} \text{ such that } A T x = \lambda A x \right\}.
\end{aligned}
$$

For $T \in \mathcal{B(H)}$, an operator $W\in \mathcal{B(H)}$ is called an $A$-adjoint of $T$ if $\langle Tx,y \rangle_A=\langle x,Wy \rangle_A$ for all $x,y \in \mathcal{H}$. The collection of all operators that possess an $A$-adjoint is denoted by $\mathcal{B}_A(\mathcal{H})$. By applying Douglas theorem, we deduce that for any $T \in \mathcal{B}(\mathcal{H})$, if $R(T^*A) \subseteq R(A)$, then $T \in \mathcal{B}_A(\mathcal{H})$, that is,
\[
\mathcal{B}_A(\mathcal{H}) = \{T \in \mathcal{B}(\mathcal{H}) : R(T^*A) \subseteq R(A)\}.
\]
Moreover, $\mathcal{B}_A(\mathcal{H})$ and $\mathcal{B}_{A^{1/2}}(\mathcal{H})$ are sub-algebras of $\mathcal{B(H)}$ and $\mathcal{B}_A(\mathcal{H})\subseteq\mathcal{B}_{A^{1/2}}(\mathcal{H}) \subseteq \mathcal{B(H)}$. Also, $\mathcal{B}_A(\mathcal{H})=\mathcal{B}_{A^{1/2}}(\mathcal{H})= \mathcal{B(H)}$, if $A$ is one-one and $R(A)$ is closed in $\mathcal{H}$.
If $T \in \mathcal{B}_A(\mathcal{H})$, then by Douglas Theorem \cite{douglus}, the reduced solution of the equation $AX = T^*A$ is denoted by $T^{\#}$, where $T^{\#} = A^\dagger T^* A$ and $R(T^{\#}) \subseteq \overline{R(A)}$. The operator $T \in \mathcal{B}_A(\mathcal{H})$ is said to be $A$-self-adjoint if $AT$ is self-adjoint, i.e., $AT = T^*A$ \cite{ahmed2012isometric}. Moreover, if $T$ is $A$-self-adjoint, it does not necessarily imply that $T=T^\#$. However, 
we have $T = T^\#$ if and only if $T$ is an $A$-self-adjoint operator and ${R}(T) \subseteq \overline{{R}(A)}$. The operator $T$ is $A$-positive and we simply write $T \geq_A 0$ if $AT$ is a positive operator. Clearly, if an operator $T$ is $A$-self-adjoint, then $T \in \mathcal{B}_A(\mathcal{H})$. Furthermore, if $T \in \mathcal{B}_A(\mathcal{H})$ is $A$-self-adjoint, then so is $T^\#$, and the property 
$
(T^\#)^{\#} = T^{\#}
$
holds.
An operator $T \in \mathcal{B}_A(\mathcal{H})$ is referred to as an $A$-normal operator if and only if $TT^{\#} = T^{\#}T$ \cite{ahmed2012isometric}. While it is well-known that all self-adjoint operators in a Hilbert space are normal, this fact may not hold for $A$-self-adjoint operators. In other words, $A$-self-adjoint operators need not necessarily be $A$-normal.
 An operator $U \in \mathcal{B}_A(\mathcal{H})$ is called $A$-unitary if 
$
\|U x\|_A = \|U^\# x\|_A = \|x\|_A \quad \text{for all } x \in \mathcal{H}.
$
Further, we have 
$
U^\# U = (U^\#) U^\# = P_{\overline{R(A)}}
$
 \cite{arias2008partial}. Note that if $U$ is $A$-unitary so is $U^\#$ and $ \| U \| _A = \| U^\# \| _A = 1$\cite{arias2008partial}. An element $x \in \mathcal{H}$ is called $A$-orthogonal to $y \in \mathcal{H}$ if $\langle x, y \rangle_A = 0$, denoted by $x \perp_A y$. Also, if $\mathcal{W}\subseteq \mathcal{H}$, then $\mathcal{W}^\perp=\{x \in\mathcal{H}:\langle x,w \rangle _A=0 ~\mbox{for all}~ w \in \mathcal{W}\}$.

The concept of the $A$-$q$-numerical range and the $A$-$q$-numerical radius was first introduced in \cite{feki2024joint}. In the sequel, we assume that $q\in \mathcal{D}$, where  $\mathcal{D}=\{z \in \mathbb{C}: |z|\le 1\}$ denotes the closed unit disk in the complex plane. For a subspace $\mathcal{V}$ of $\mathcal{H}$, we denote by $\mathbb{S}_{q, A}(\mathcal{V})$ the subset of $\mathcal{V} \times \mathcal{V}$ given by
\begin{equation*}
	\mathbb{S}_{q, A}(\mathcal{V})=\left\{ (x,y)\in \mathcal{V}\times \mathcal{V} : \Vert x \Vert_{A}=\Vert y \Vert_{A}=1 \textup{ and } \langle x, y \rangle_{A}=q \right\}.
\end{equation*}
\begin{definition}
 For $T \in \mathcal{B}_{A^{1 / 2}}(\mathcal{H})$, the $A$-$q$-numerical range $W_{q,A}(T)$ and the $A$-$q$-numerical radius $w_{q,A}(T)$ of $T$, are respectively defined as follows:
\begin{align*}
 W_{q,A}(T)&=\left\{\langle T x, y\rangle_A: (x,y) \in \mathbb{S}_{q, A}(\mathcal{H}) \right\},~\text{and}\\ 
 w_{q,A}(T)&=\sup \left\{|\lambda|: \lambda \in W_{q,A}(T)\right\} .
\end{align*} 
\end{definition}	

Feki et al. \cite{feki2024joint} proved that the $A$-$q$-numerical range $W_{q,A}(T)$ is convex. The aforementioned definition serves as a unifying framework that generalizes several significant concepts in the literature. In fact, if $q = 1$, it reduces to the $A$-numerical range \cite{zamani2019numerical, sen2024note}, if $A = I$, it yields the $q$-numerical range \cite{tsing1984constrained, stankovic2024some, kittaneh2025estimation}, and in the special case when $A = I$ and $q = 1$, it coincides with the classical numerical range.

Section 2 of this paper discusses the relationship between the $A$-$q$-numerical range and the $A$-spectrum in the setting of semi-Hilbertian structure. In this section, we derive a circular union formula for the $A$-$q$-numerical range, showing that it can be expressed as the union of circular discs, which generalizes \cite[Lemma 5]{tsing1984constrained}. An alternative expression for the $A$-$q$-numerical range and its radius is also obtained. The analysis in this section emphasizes the connection between the $q$-numerical range of operators in Hilbert and semi-Hilbertian spaces, and a complete characterization of the $A$-$q$-numerical range for $A$-self-adjoint matrices is provided. In particular, we prove that the $A$-$q$-numerical range of an $A$-self-adjoint matrix is an elliptic disk with foci at $q \lambda_1$ and $q \lambda_m$, and a minor axis of length $\sqrt{1-|q|^2}(\lambda_1 - \lambda_m)$, where $\lambda_1$ and $\lambda_m$ denote the largest and smallest $A$-eigenvalues of $T$, respectively. This result generalizes  \cite[Theorem 3.4, p. 384]{gau2021numerical}.

Section 3 introduces the notion of $A$-nilpotent operators and extends the classical class of nilpotent operators to semi-Hilbertian settings. Several key properties are established, generalizing the corresponding results for nilpotent operators. It is shown that the $A$-$q$-numerical range of an $A$-nilpotent operator $T$ of index $2$ is a disk (open or closed) centered at the origin, and $w_{q,A}(T) \le \left(\frac{1+\sqrt{1-|q|^2}}{2}\right)\|T\|_A$. In addition, an upper bound for the $A$-$q$-numerical radius of a class of $A$-nilpotent operators of index $3$ is derived.

\section{Some Results on $A$-$q$-numerical range }
We start this section with the following lemma, which ensures the existence of a vector $y \in \mathcal{H}$ corresponding to any $x \in \mathcal{H}$ with $\|x\|_A = 1$. Moreover, when $\dim R(A) = 1$, it has been shown that $W_{q,A}(T) \neq \emptyset$ if and only if $|q| = 1$ \cite{feki2024joint}. Therefore, in what follows, we consider the nontrivial case $\dim R(A) \geq 2$.

\begin{lemma}\label{flemma}
Let $T \in \mathcal{B}_{A^{\frac{1}{2}}}(\mathcal{H})$, where $A \in \mathcal{B}(\mathcal{H})^+$ such that $\dim(R(A)) \geq 2$, and $q \in \mathcal{D}$. If $x \in \mathcal{H}$ with $\|x\|_A = 1$, then there exists $z \in \mathcal{H}$ such that $\langle x, z \rangle_A = 0$ and $\|z\|_A = 1$.In particular, if $x \in \mathcal{H}$ with $\|x\|_{A} = 1$, then there exists a vector $y = \overline{q}x + \sqrt{1 - |q|^{2}}z$ such that $\|y\|_{A} = 1$ and $\langle x, y \rangle_{A} = q$. 
\end{lemma}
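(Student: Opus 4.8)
The plan is to prove the two assertions in sequence, with the first one (the existence of a unit vector $A$-orthogonal to $x$) carrying the real content and the second following by a short computation. The key preliminary observation is that the $A$-seminorm vanishes precisely on $N(A)$: since $\|u\|_A^2 = \langle Au, u\rangle = \|A^{1/2}u\|^2$, we have $\|u\|_A = 0$ if and only if $u \in N(A)$. Consequently $\langle \cdot, \cdot \rangle_A$ descends to a genuine inner product on the quotient $\mathcal{H}/N(A)$, and because $N(A) = \overline{R(A)}^{\perp}$, the hypothesis $\dim R(A) \geq 2$ guarantees that this quotient has dimension at least $2$.

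For the first assertion, I would run a Gram--Schmidt step adapted to the semi-inner product. Since $\|x\|_A = 1$, the vector $x$ is nonzero in $\mathcal{H}/N(A)$; because that quotient has dimension at least $2$, there exists $w \in \mathcal{H}$ whose class is linearly independent from the class of $x$. Setting $z' = w - \langle w, x\rangle_A\, x$, one checks immediately that $\langle z', x \rangle_A = \langle w, x\rangle_A - \langle w, x\rangle_A \|x\|_A^2 = 0$. The crucial point is that $\|z'\|_A \neq 0$: were it zero, $z'$ would lie in $N(A)$, forcing the class of $w$ to be a scalar multiple of the class of $x$, contradicting independence. Normalizing, $z = z'/\|z'\|_A$ satisfies $\|z\|_A = 1$ and $\langle x, z\rangle_A = 0$.

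For the second assertion, I would substitute $y = \bar q\, x + \sqrt{1-|q|^2}\, z$ and expand, using $\|x\|_A = \|z\|_A = 1$ and $\langle x, z\rangle_A = 0$. Conjugate-linearity in the second slot gives $\langle x, y\rangle_A = q\langle x,x\rangle_A + \sqrt{1-|q|^2}\,\langle x, z\rangle_A = q$, while expanding $\|y\|_A^2 = \langle y, y\rangle_A$ leaves only the diagonal terms $|q|^2\|x\|_A^2 + (1-|q|^2)\|z\|_A^2 = 1$, since $\sqrt{1-|q|^2}$ is real and each cross term carries the factor $\langle x, z\rangle_A = 0$.

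The main obstacle, and really the only nonroutine step, is justifying the existence of the auxiliary vector $w$ together with the nonvanishing of $\|z'\|_A$. This is exactly where $\dim R(A) \geq 2$ is indispensable: it has to be translated into two-dimensionality of the effective inner-product space $\mathcal{H}/N(A)$ so that a vector $A$-independent from $x$ is available. Everything else is the familiar Gram--Schmidt bookkeeping transported to the degenerate setting, and the closing identity is purely algebraic.
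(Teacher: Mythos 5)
Your proof is correct, and it reaches the conclusion by a somewhat different route than the paper. The paper works directly inside $\mathcal{H}$: it writes $\mathcal{H} = \mathrm{span}\{Ax\} \oplus (\mathrm{span}\{Ax\})^{\perp}$ and argues by contradiction in two stages, first that $(\mathrm{span}\{Ax\})^{\perp} \neq \{0\}$ (else $\dim \mathcal{H} = 1$), and second that not every vector in $(\mathrm{span}\{Ax\})^{\perp}$ can lie in $N(A)$ (else, via $\mathcal{H} = N(A) \oplus \overline{R(A)}$, one gets $\overline{R(A)} \subseteq \mathrm{span}\{Ax\}$, contradicting $\dim R(A) \geq 2$); the surviving $z'$ is then normalized. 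You instead pass to the quotient $\mathcal{H}/N(A)$, observe that the semi-inner product becomes a genuine inner product there and that $N(A) = \overline{R(A)}^{\perp}$ makes the quotient at least two-dimensional, and then run a constructive Gram--Schmidt step, with the nonvanishing of $\|z'\|_A$ justified by linear independence of classes. The two arguments hinge on the same fact --- the effective inner-product space has dimension at least $2$ --- but yours is constructive where the paper's is by contradiction, and the quotient viewpoint makes the role of the hypothesis $\dim R(A) \geq 2$ more transparent. A further point in your favor: you explicitly verify the ``in particular'' clause (that $\langle x, y \rangle_A = q$ and $\|y\|_A = 1$), which the paper's proof omits entirely, ending at the existence of $z$.
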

\begin{proof}
 Let $ \mathcal{H}=span\{Ax\} \oplus (span\{Ax\})^\perp$. If there does not exist any non-zero $z'$ such that $\langle x,z' \rangle_A=\langle Ax,z' \rangle=0$, then $(span\{Ax\})^\perp=\{0\}$. This implies $\mathcal{H}=span\{Ax\}$ and $\dim(\mathcal{H})=1$, which is not possible as $\dim(R(A)) \geq 2$. Thus, there exists a non-zero $z'$ such that $\langle x,z' \rangle_A=\langle Ax,z' \rangle=0$. If all such $z' \in N(A)$, then $(span\{Ax\})^\perp \subseteq N(A)$. Also, $\mathcal{H}={N(A)} \oplus \overline{R(A)}$. Thus, we have $\overline{R(A)}\subseteq span\{Ax\}$. Therefore, $\dim (\overline{R(A)}) \le 1$, which is a contradiction. Thus for any $x \in\mathcal{H}$ with $\|x\|_A=1$, there exists a $z=\frac{z'}{\|z\|_A}$ such that $\langle x,z \rangle_A=0$ and $\|z\|_A=1$.   \end{proof}

\begin{remark}
 It is important to note that if $\dim(R(A)) = 1$ and $x \in \mathcal{H}$ with $\|x\|_A = 1$, then there need not exist a vector $z \in \mathcal{H}$ such that $\langle x, z \rangle_A = 0$ and $\|z\|_A = 1$.
 For example, Let $A=\begin{pmatrix}
     1 & 0 & 0 \\
     0 & 0 & 0 \\
      0 & 0 & 0
 \end{pmatrix}$ and $x=\begin{pmatrix}
     i \\
     1\\
     0
 \end{pmatrix} \in \mathbb{C}^3$. Clearly $\|x\|_A=1$. If there exists $ z \in \mathbb{C}^3 $ such that $ \langle x, z \rangle_A = 0 $, then it is easy to verify that $ z \in N(A)$. This implies $ \|z\|_A = 0 \ne 1 $. 
\end{remark}
\begin{remark}\label{recon}
 For any $y \in \mathcal{H}$ with $\|y\|_A=1$ and $\langle x,y \rangle_A=q$, set $z=\frac{1}{\sqrt{1-|q|^2}}(y-\overline{q}x)$ with $|q| < 1$, resulting in $\langle x,z \rangle_A=0$ and $\|z\|_A=1$. Thus, there exists a one-to-one correspondence between such a $z$ and $y$ mentioned in Lemma \ref{flemma}. If $|q| = 1$, then we have 
$|q| = |\langle x, y \rangle_A| = \|x\|_A \|y\|_A = 1$. 
This implies 
$|\langle A^{\frac{1}{2}} x, A^{\frac{1}{2}} y \rangle| = \|A^{\frac{1}{2}} x\| \, \|A^{\frac{1}{2}} y\|$, 
which gives equality in the Cauchy--Schwarz inequality. 
Consequently, 
$A^{\frac{1}{2}} x = \lambda A^{\frac{1}{2}} y$. Then $\lambda={q}$. In this case, $W_{q,A}(T)={q}W_A(T)$.
\end{remark}
In the following result, we establish some spectral inclusion relations of $W_{q, A}(T)$.

\begin{proposition}\label{spectrum}
	Let $T \in \mathcal{B}_{A^\frac{1}{2}}(\mathcal{H})$ and $q \in \mathcal{D}$. Then
	\begin{itemize}
		\item [(i)] $q \sigma_{A_p}(T) \subseteq W_{q,A}(T)$,
		\item [(ii)] $q\sigma_{A_{app}}({T})\subseteq \overline{W_{q,A}(T)},$
		where $\overline{W_{q,A}(T)}$ denotes the closure of $W_{q,A}(T)$,
	\end{itemize}
 
 \begin{itemize}
     \item [(iii)] If $R(A)$ is closed, then $q \sigma_A(T)\subseteq \overline{W_{q,A}(T)}$.
 \end{itemize}
	
\end{proposition}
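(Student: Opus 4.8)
The three inclusions build on one another, so I would establish them in order, deriving (ii) from the construction used in (i) and (iii) from (ii). For (i), let $\lambda \in \sigma_{A_p}(T)$, witnessed by a nonzero $x \in \overline{R(A)}$ with $P_{\overline{R(A)}}Tx = \lambda x$. Since $\mathcal{H} = N(A)\oplus\overline{R(A)}$, a nonzero vector of $\overline{R(A)}$ cannot lie in $N(A)$, so $\|x\|_A > 0$ and I may normalize to $\|x\|_A = 1$. Applying Lemma \ref{flemma} produces $y = \overline{q}x + \sqrt{1-|q|^2}\,z$ with $\|y\|_A = 1$, $\langle x,y\rangle_A = q$, and $\langle x,z\rangle_A = 0$, so that $\langle Tx,y\rangle_A \in W_{q,A}(T)$. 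Expanding by conjugate-linearity in the second slot gives $\langle Tx,y\rangle_A = q\langle Tx,x\rangle_A + \sqrt{1-|q|^2}\,\langle Tx,z\rangle_A$. Using the identity $AP_{\overline{R(A)}} = A$ together with $P_{\overline{R(A)}}Tx = \lambda x$, I get $\langle Tx,x\rangle_A = \langle P_{\overline{R(A)}}Tx,x\rangle_A = \lambda\|x\|_A^2 = \lambda$ and $\langle Tx,z\rangle_A = \langle AP_{\overline{R(A)}}Tx,z\rangle = \lambda\langle x,z\rangle_A = 0$, whence $\langle Tx,y\rangle_A = q\lambda$ and $q\lambda \in W_{q,A}(T)$.

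For (ii), let $\lambda \in \sigma_{A_{app}}(T)$ with $\{x_n\}\subseteq\overline{R(A)}$, $\|x_n\|_A = 1$, and $\|(T-\lambda I)x_n\|_A \to 0$. For each $n$, build $y_n$ from $x_n$ as in Lemma \ref{flemma}, so that $\langle Tx_n,y_n\rangle_A \in W_{q,A}(T)$ and $\langle x_n,y_n\rangle_A = q$. Then $\langle Tx_n,y_n\rangle_A = \lambda\langle x_n,y_n\rangle_A + \langle (T-\lambda I)x_n,y_n\rangle_A = q\lambda + \langle (T-\lambda I)x_n,y_n\rangle_A$, and the Cauchy--Schwarz inequality for the semi-inner product bounds the remainder by $\|(T-\lambda I)x_n\|_A\,\|y_n\|_A = \|(T-\lambda I)x_n\|_A \to 0$. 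Hence $\langle Tx_n,y_n\rangle_A \to q\lambda$, and since every term lies in $W_{q,A}(T)$ we conclude $q\lambda \in \overline{W_{q,A}(T)}$.

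For (iii), I would reduce to (ii) through the boundary-of-spectrum principle together with convexity. When $R(A)$ is closed the $A$-spectrum $\sigma_A(T)$ is a nonempty compact subset of $\mathbb{C}$ satisfying $\partial\sigma_A(T)\subseteq\sigma_{A_{app}}(T)$; combining this with (ii) yields $q\,\partial\sigma_A(T)\subseteq\overline{W_{q,A}(T)}$. Because $W_{q,A}(T)$ is convex, its closure $\overline{W_{q,A}(T)}$ is closed and convex, and since a compact planar set is contained in the convex hull of its topological boundary, I obtain $q\,\sigma_A(T)\subseteq q\,\mathrm{conv}(\partial\sigma_A(T)) = \mathrm{conv}(q\,\partial\sigma_A(T))\subseteq\overline{W_{q,A}(T)}$.

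The inner-product computation in (i) and the limiting argument in (ii) are routine; the step I expect to be the main obstacle is the boundary inclusion $\partial\sigma_A(T)\subseteq\sigma_{A_{app}}(T)$ in (iii), as this is precisely where the closedness of $R(A)$ enters. Closedness makes $A$-invertibility in $\mathcal{B}_{A^{1/2}}(\mathcal{H})$ behave like ordinary invertibility on the Hilbert space $\mathbf{R}(A^{1/2})$, so that a boundary point of the spectrum is a topological zero divisor and hence fails to be bounded below, producing an $A$-approximate eigenvalue. I would either invoke this from the spectral-theory references (\cite{baklouti2022spectral, feki2020spectral}) or prove it directly by the standard boundary argument.
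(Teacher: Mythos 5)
Your proposal is correct and follows essentially the same route as the paper: parts (i) and (ii) use the Lemma \ref{flemma} construction of $y$ together with the Cauchy--Schwarz inequality exactly as the paper does, and part (iii) combines the boundary inclusion $\partial\sigma_A(T)\subseteq\sigma_{A_{app}}(T)$ with convexity of $W_{q,A}(T)$, which is precisely the paper's argument (the paper cites \cite{sen2024note} for the boundary inclusion). The only differences are cosmetic: you use the equivalent $P_{\overline{R(A)}}Tx=\lambda x$ formulation of the $A$-point spectrum where the paper uses $ATx=\lambda Ax$, and you make explicit the normalization of $x$ and the convex-hull-of-the-boundary step that the paper leaves implicit.
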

\begin{proof}
\begin{itemize}
    \item [(i)]	Let $\lambda \in  \sigma_{A_p}(T).$ Then, there exists $x\in \overline{R(A)}$, $\|x\|_A=1$ such that $ATx=\lambda Ax.$ Using Lemma \ref{flemma} there exists a vector $y= \overline{q}x+ \sqrt{1-|q|^2}z$, where $z\in \mathcal{H}$, and $\Vert z \Vert_A =1$ and $\langle x, z \rangle_A=0.$ Clearly, $(x,y)\in \mathbb{S}_{q,A}(\mathcal{H}).$  Moreover,
\begin{align*}
 	\lambda q=  &\lambda \langle x,y \rangle_A \\
   =& \langle   \lambda Ax,y \rangle  \\
   =& \langle   ATx,y \rangle  \\
   = &\langle Tx,y \rangle_A.   
\end{align*}

		This implies, $\lambda q \in 	W_{q,A}(T)$ and consequently,  $q  \sigma_{A_p}(T) \subseteq W_{q,A}(T)$.
 \item [(ii)]
Let $\lambda \in \sigma_{A_{app}}({T}),$ then there exists a sequence $\{x_n\}$ in $\overline{R(A)}$ with $\|x_n\|_A=1$ such that 
		$
			\lim_{n\to \infty }\big\|( T-\lambda I)x_n \big\|_A =0.
		$	
		On the other hand, for each $x_n \in \overline{R(A)}$, there exists $y_n \in \mathcal{H}$ such that $(x_n, y_n) \in \mathbb{S}_{q,A}(\mathcal{H})$. Hence,  $\langle Tx_n,y_n \rangle_A \in W_{q,A}(T).$ Using Cauchy-Schwarz's inequality, we get
\begin{align*}
 	|\langle Tx_n,y_n \rangle_A -q\lambda|= &|\langle Tx_n,y_n \rangle_A -\lambda \langle x_n,y_n \rangle_A|\\
  =& |\langle (T-\lambda I) x_n,y_n \rangle_A|\\  
  \le &\|(T-\lambda I)x_n\|_A\|y\|_A\\
= &\|(T-\lambda I)x_n\|_A,   
\end{align*}

		for all $n\in \mathbb{N}.$	This implies,
		$$
			\lim_{n\to \infty} \langle Tx_n,y_n \rangle_A = q \lambda.
		$$
		Therefore $q\lambda \in\overline{W_{q,A}(T)},$ and hence $q\sigma_{A_{app}}({T})\subseteq \overline{W_{q,A}(T)}.$  
\item [(iii)]      Since $\partial \sigma_A(T)\subseteq \sigma_{A_{app}}(T)$ \cite{sen2024note}, we have
	$
	q\partial \sigma_A(T)\subseteq q\sigma_{A_{app}}(T)\subseteq \overline{W_{q,A}(T)}.
	$
Convexity of $W_{q,A}(T)$\cite{feki2024joint} implies that
	$q \sigma_A(T)\subseteq \overline{W_{q,A}(T)}$. 
\end{itemize}       
\end{proof}

\begin{remark}
 If $T(N(A)) \subseteq N(A)$ and $\mathcal{H}$ is finite dimensional, then by Theorem 26\cite{feki2024joint}, we have $q\sigma_{A}(T)\subseteq W_{q,A}(T)$, where $R(A)$ is closed.
\end{remark}
We now present an example demonstrating that the closedness of $R(A)$ is not a necessary condition in Proposition \ref{spectrum}(iii).
\begin{example}
 Let $\{e_n\}$ be the standard orthonormal basis of the Hilbert space $\ell^2$. For a fixed $p \in \mathbb{N}$ with $p \geq 2$, define a bounded linear operator $A : \ell^2 \to \ell^2$ by
\[
Ae_n = \frac{1}{(\log (n+1))^p} e_n, \quad \forall n \in \mathbb{N}.
\]
Clearly, $A$ is a positive operator and $R(A)$ is not closed. Let $\{\lambda_n\}$ be a sequence of complex scalars, where $|\lambda_n| \le 1$ and $\lambda_n \to \lambda_0$. Define a bounded linear operator $T : \ell^2 \to \ell^2$ by
\[
Te_n = \lambda_n e_n, \quad \forall n \in \mathbb{N}.
\]

It is straightforward to verify that
\[
\{\lambda_n : n \in \mathbb{N}\}\subseteq \sigma_{A_p}(T).
\]
Since $\sigma_{A_{app}}(T)$ is a closed set \cite[Theorem 2.5]{majumdar2024approximate} and $\sigma_{A_p}(T)\subseteq \sigma_{A_{app}}(T)$, we have 
\[
\{\lambda_n : n \in \mathbb{N} \cup \{0\}\}\subseteq \sigma_{A_{app}}(T).
\]

Now, assume $\gamma \notin \{\lambda_n : n \in \mathbb{N} \cup \{0\}\}$. Then there exists $\delta > 0$ such that $|\gamma - \lambda_n| \geq \delta$ for all $n \in \mathbb{N} \cup \{0\}$. Let $\{{x'}_i\}$ be an arbitrary sequence in $\ell^2$ such that ${x'}_i = \sum_{n=1}^\infty x_n^{(i)}e_n$ and
\[
\|{x'}_i\|_A = \left( \sum_{n=1}^\infty |x_n^{(i)}|^2 \|e_n\|_A^2 \right)^{1/2} = 1.
\]
Then,
\[
\|(T - \gamma I){x'}_i\|_A^2 = \sum_{n=1}^\infty |\lambda_n - \gamma|^2 |x_n^{(i)}|^2 \|e_n\|_A^2 \geq \delta^2 > 0.
\]
Thus, $\gamma \notin \sigma_{A_{app}}(T)$ and this implies $\gamma \notin \sigma_{A_{p}}(T)$. 
Therefore,
\begin{eqnarray*}
\sigma_{A_{p}}(T) = \{\lambda_n : n \in \mathbb{N}\}\\
    \sigma_{A_{app}}(T) = \{\lambda_n : n \in \mathbb{N} \cup \{0\}\}
\end{eqnarray*}
As $\partial \sigma_A(T) \subseteq \sigma_{A_{app}}(T) $ \cite[Remark 2.21]{majumdar2024approximate}, we have boundary of $\sigma_A(T)$ is countable. Moreover, $\sigma_A(T)$ is a compact set \cite[Proposition 5.8]{baklouti2022spectral}. We conclude that $\sigma_A(T)=\partial \sigma_A(T)$. Also $\sigma_A(T)=\partial \sigma_A(T) \subseteq \sigma_{A_{app}}(T) \subseteq \sigma_A(T)$, yields 
\[
\sigma_A(T) = \partial \sigma_A(T) = \{\lambda_n : n \in \mathbb{N} \cup \{0\}\}.
\]
Using Proposition \ref{spectrum}(i), we have $\{q\lambda_n : n \in \mathbb{N}\} \subseteq W_{q,A}(T)$. Hence, $\{q\lambda_n : n \in \mathbb{N} \cup \{0\}\} =q \sigma_A(T)\subseteq \overline{W_{q,A}(T)}$.
\end{example}


Next, to prove our forthcoming result, we require the following lemma, which establishes the $A$-unitary invariance of the $A$-$q$-numerical range, i.e., \( W_{q,A}(UTU^\#) = W_{q,A}(T) \).
\begin{lemma}\label{propuni}
Let $T \in \mathcal{B}_{A^\frac{1}{2}}(\mathcal{H})$ and $q \in \mathcal{D}$. Then $W_{q,A}(UTU^\#)=W_{q,A}(T)$, where $U$ is $A$-unitary operator. 
\end{lemma}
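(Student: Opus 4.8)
The plan is to move the two outer factors of $UTU^\#$ across the $A$-inner product using the defining property of the $A$-adjoint, and then check that the resulting substitution is a bijection of $\mathbb{S}_{q,A}(\mathcal{H})$ that preserves the value being measured. Concretely, for any $(x,y)\in \mathbb{S}_{q,A}(\mathcal{H})$, applying $\langle Ua,b\rangle_A=\langle a,U^\# b\rangle_A$ twice gives $\langle UTU^\# x,y\rangle_A=\langle TU^\# x,U^\# y\rangle_A$. Setting $x_0=U^\# x$ and $y_0=U^\# y$, the value is exactly $\langle Tx_0,y_0\rangle_A$, so the whole task reduces to verifying that $(x_0,y_0)\in\mathbb{S}_{q,A}(\mathcal{H})$. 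Norm preservation is immediate from $A$-unitarity, since $\|x_0\|_A=\|U^\# x\|_A=\|x\|_A=1$ and likewise for $y_0$. For the constraint, I would write $\langle x_0,y_0\rangle_A=\langle U^\# x,U^\# y\rangle_A=\langle UU^\# x,y\rangle_A=\langle P_{\overline{R(A)}}x,y\rangle_A$, using $UU^\#=P_{\overline{R(A)}}$; and since $AP_{\overline{R(A)}}=A$ this equals $\langle Ax,y\rangle=\langle x,y\rangle_A=q$. Hence $\langle UTU^\# x,y\rangle_A\in W_{q,A}(T)$, which yields $W_{q,A}(UTU^\#)\subseteq W_{q,A}(T)$.

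For the reverse inclusion I would run the substitution in the other direction: given $(u,v)\in\mathbb{S}_{q,A}(\mathcal{H})$, put $x=Uu$ and $y=Uv$. Again the $A$-norms are preserved, and $\langle x,y\rangle_A=\langle Uu,Uv\rangle_A=\langle u,U^\# Uv\rangle_A=\langle u,P_{\overline{R(A)}}v\rangle_A=\langle u,v\rangle_A=q$ (using $U^\# U=P_{\overline{R(A)}}$ together with $P_{\overline{R(A)}}A=A$ and self-adjointness of $P_{\overline{R(A)}}$), so $(x,y)\in\mathbb{S}_{q,A}(\mathcal{H})$. Computing the value for this pair gives $\langle UTU^\# x,y\rangle_A=\langle TU^\# Uu,U^\# Uv\rangle_A=\langle TP_{\overline{R(A)}}u,P_{\overline{R(A)}}v\rangle_A$, and the remaining step is to identify this with $\langle Tu,v\rangle_A$, which then exhibits an arbitrary element of $W_{q,A}(T)$ inside $W_{q,A}(UTU^\#)$.

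That last identification is the only delicate point, and it is where I expect the real content to sit, because $U$ is merely $A$-unitary and so $U^\# U=UU^\#=P_{\overline{R(A)}}$ rather than the identity. The key observations are that the $A$-inner product annihilates any $N(A)$-component (from $A(I-P_{\overline{R(A)}})=0$) and that $T(N(A))\subseteq N(A)$ for $T\in\mathcal{B}_{A^{1/2}}(\mathcal{H})$, as recorded in the preliminaries. Writing $u=P_{\overline{R(A)}}u+n$ with $n\in N(A)$, one gets $ATu=ATP_{\overline{R(A)}}u$ since $Tn\in N(A)$; combining this with $P_{\overline{R(A)}}A=A$ to insert and remove the projection in the second slot gives $\langle TP_{\overline{R(A)}}u,P_{\overline{R(A)}}v\rangle_A=\langle Tu,v\rangle_A$. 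With both inclusions established, the two sets coincide and the lemma follows. The main obstacle throughout is thus not the algebra of shifting adjoints but the careful bookkeeping showing that replacing a vector by its $P_{\overline{R(A)}}$-projection alters neither the $A$-norms, nor the value $q=\langle\cdot,\cdot\rangle_A$, nor the quantity $\langle T\cdot,\cdot\rangle_A$.
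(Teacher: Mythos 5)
Your strategy coincides with the paper's: move $U$ across the $A$-inner product via $\langle Ua,b\rangle_A=\langle a,U^\#b\rangle_A$, verify that the substituted pair remains in $\mathbb{S}_{q,A}(\mathcal{H})$, and use $T(N(A))\subseteq N(A)$ together with $AP_{\overline{R(A)}}=A$ to discard projections; your identification $\langle TP_{\overline{R(A)}}u,P_{\overline{R(A)}}v\rangle_A=\langle Tu,v\rangle_A$ is exactly the content of the paper's decomposition step. The one genuine flaw is the operator identity $UU^\#=P_{\overline{R(A)}}$ invoked in your first inclusion. For an $A$-unitary $U$ the valid identities are $U^\#U=(U^\#)^\#U^\#=P_{\overline{R(A)}}$ (this is what the paper's preliminaries assert, modulo a typo); since $U$ itself need not map $\mathcal{H}$ into $\overline{R(A)}$, the product $UU^\#$ can carry a nonzero component into $N(A)$. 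For instance, with $A=\begin{pmatrix}1&0\\0&0\end{pmatrix}$ and $U=\begin{pmatrix}1&0\\1&1\end{pmatrix}$ one checks that $U$ is $A$-unitary with $U^\#=\begin{pmatrix}1&0\\0&0\end{pmatrix}$, yet $UU^\#=\begin{pmatrix}1&0\\1&0\end{pmatrix}\neq P_{\overline{R(A)}}$.

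The step nevertheless survives, because only the action of $UU^\#$ under $A$ matters: the true statement is $AUU^\#=A$, which gives $\langle UU^\#x,y\rangle_A=\langle x,y\rangle_A=q$ as you claim. The cleanest repairs are: (a) follow the paper and pull $U^\#$ out of the first slot instead of the second, $\langle U^\#x,U^\#y\rangle_A=\langle (U^\#)^\#U^\#x,y\rangle_A=\langle P_{\overline{R(A)}}x,y\rangle_A=q$, which uses only the correct identity; or (b) avoid operator identities altogether by noting that $\|U^\#z\|_A=\|z\|_A$ for all $z$, so by the polarization identity $U^\#$ preserves $A$-inner products, whence $\langle U^\#x,U^\#y\rangle_A=\langle x,y\rangle_A$ immediately (the same remark also streamlines your verification of $\langle Uu,Uv\rangle_A=q$ in the reverse inclusion). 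With this single justification corrected, your proof is the paper's argument, merely organized in the reverse direction.
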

\begin{proof}
Let $\lambda \in W_{q,A}(T)$. Then, there exists $(x,y) \in \mathbb{S}_{q, A}(\mathcal{H})$ such that $\lambda=\langle Tx,y \rangle _A$. Using decomposition $\mathcal{H}=\overline{R(A)} \oplus N(A) $, we can write $x=P_{\overline{{R}(A)}}x+y'$ and $y=P_{\overline{{R}(A)}}y+y''$, where $y', y'' \in N(A)$. Now
\begin{align*}
	\langle Tx,y \rangle _A = &\langle T(P_{\overline{{R}(A)}}x+y'),P_{\overline{{R}(A)}}y+y'' \rangle _A\\
	= & \langle TP_{\overline{{R}(A)}}x,P_{\overline{{R}(A)}}y \rangle_A.
\end{align*}
As $U$ is $A$-unitary, we can take $P_{\overline{{R}(A)}}=U^\#U=(U^\#)^\#U^\#$. Therefore,
$\langle Tx,y \rangle _A =\langle TU^\#Ux,U^\#Uy \rangle_A=\langle UTU^\#Ux,Uy \rangle_A,$ 
where $\|Ux\|_A=\|x\|_A=1$ and $\|Uy\|_A=\|y\|_A=1$. Moreover, 
\begin{align*}
 \langle Ux,Uy \rangle_A=&\langle U^\#Ux,y \rangle_A\\
 =&\langle P_{\overline{{R}(A)}}x,y \rangle_A=\langle x-y',y \rangle_{A}\\
 =&\langle x,y \rangle_{A}-\langle y',y \rangle_{A}=q.  
\end{align*}

Thus, $\lambda \in W_{q,A}(UTU^\#)$ and $W_{q,A}(T) \subseteq W_{q,A}(UTU^\#)$. 

Conversely, let $\lambda \in W_{q,A}(UTU^\#)$. There exist $x,y \in \mathbb{S}_{q, A}(\mathcal{H})$ such that $\lambda = \langle UTU^\# x,y \rangle_{A}$. Thus,
\begin{equation*}
    \langle UTU^\# x,y \rangle_{A}=\langle TU^\# x,U^\# y \rangle_{A}. 
\end{equation*}
Since $U$ is $A$-unitary, it follows that
$\|U^\#x\|_A=\|x\|_A=1, \ \|U^\#y\|_A=\|y\|_A=1,$ and 
\begin{align*}
  \langle U^\#x,U^\#y\rangle_{A}=&\langle (U^\#)^\#U^\#x,y\rangle_{A}\\
  =& \langle P_{\overline{{R}(A)}}x,y \rangle_{A}=\langle x-y',y\rangle_{A}\\
  =&q.  
\end{align*}

Thus, $\lambda \in W_{q,A}(T)$ and $W_{q,A}(UTU^\#)\subseteq W_{q,A}(T)$. This completes the proof.
\end{proof}
Next, we extend the classical notion of approximate unitary equivalence of two operators \cite{luketero2024approximately} from the Hilbert–space setting to the semi-Hilbertian framework.
\begin{definition}
 Two operators $T,S \in \mathcal{B}_{A^\frac{1}{2}}(\mathcal{H})$ are said to be $A$-approximately unitarily equivalent if there exists a sequence $\{U_n\}_{n=1}^\infty$ of $A$-unitary operators such that 
 \begin{equation*}
     \lim_{n \to \infty}\|U_nSU_n^\#-T\|_A=0.
 \end{equation*}
\end{definition}
The following theorem establishes a relation between the $A$-$q$-numerical range and radii of two $A$-approximately unitarily equivalent operators.
\begin{theorem}
Let $q \in \mathcal{D}$ and $T,S \in \mathcal{B}_{A^\frac{1}{2}}(\mathcal{H})$ are $A$-approximately unitarily equivalent operators. Then $\overline{W_{q,A}(T)}=\overline{W_{q,A}(S)}$ and $w_{q,A}(T)=w_{q,A}(S)$. In this case, $ q\sigma_A(T) \subseteq \overline{ W_{q,A}(S)}$, if $R(A)$ is closed.    
\end{theorem}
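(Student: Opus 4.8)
The plan is to reduce all three assertions to a single perturbation estimate: that the $A$-$q$-numerical range depends continuously, in the Hausdorff metric, on the operator measured in the $A$-operator seminorm. Once this is established, the $A$-unitary invariance of Lemma \ref{propuni} and the spectral inclusion of Proposition \ref{spectrum}(iii) finish everything with no further work.

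First I would record the elementary estimate underpinning the whole argument. For any $B,C \in \mathcal{B}_{A^{1/2}}(\mathcal{H})$ and any pair $(x,y)\in \mathbb{S}_{q,A}(\mathcal{H})$, the Cauchy--Schwarz inequality for $\langle\cdot,\cdot\rangle_A$ gives
\[
|\langle Bx,y\rangle_A-\langle Cx,y\rangle_A|=|\langle (B-C)x,y\rangle_A|\le \|(B-C)x\|_A\|y\|_A\le \|B-C\|_A,
\]
using $\|x\|_A=\|y\|_A=1$ together with the bound $\|(B-C)x\|_A\le\|B-C\|_A\|x\|_A$ valid for operators in $\mathcal{B}_{A^{1/2}}(\mathcal{H})$. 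Since the same pair $(x,y)$ yields $\langle Bx,y\rangle_A\in W_{q,A}(B)$ and $\langle Cx,y\rangle_A\in W_{q,A}(C)$, every point of $W_{q,A}(B)$ lies within distance $\|B-C\|_A$ of a point of $W_{q,A}(C)$ and conversely; that is, the Hausdorff distance satisfies $d_H\bigl(W_{q,A}(B),W_{q,A}(C)\bigr)\le\|B-C\|_A$.

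Next I would set $B_n=U_nSU_n^\#$. By Lemma \ref{propuni} we have $W_{q,A}(B_n)=W_{q,A}(S)$ for every $n$, so the estimate above gives $d_H\bigl(W_{q,A}(S),W_{q,A}(T)\bigr)=d_H\bigl(W_{q,A}(B_n),W_{q,A}(T)\bigr)\le\|B_n-T\|_A$ for all $n$. Letting $n\to\infty$ and invoking $A$-approximate unitary equivalence forces $d_H\bigl(W_{q,A}(S),W_{q,A}(T)\bigr)=0$; since both sets are nonempty (Lemma \ref{flemma} guarantees $\mathbb{S}_{q,A}(\mathcal{H})\ne\emptyset$ because $\dim R(A)\ge 2$) and bounded, this yields $\overline{W_{q,A}(T)}=\overline{W_{q,A}(S)}$. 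The equality of radii is then immediate, as $w_{q,A}(\cdot)$ is the supremum of the continuous map $\lambda\mapsto|\lambda|$ over $W_{q,A}(\cdot)$, and such a supremum is unchanged upon passing to the closure. Finally, if $R(A)$ is closed, Proposition \ref{spectrum}(iii) gives $q\sigma_A(T)\subseteq\overline{W_{q,A}(T)}$, and substituting the just-proved identity $\overline{W_{q,A}(T)}=\overline{W_{q,A}(S)}$ delivers $q\sigma_A(T)\subseteq\overline{W_{q,A}(S)}$.

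The only point requiring care—and the step I expect to be the main obstacle—is justifying $\|(B-C)x\|_A\le\|B-C\|_A\|x\|_A$ for arbitrary $x\in\mathcal{H}$ rather than only $x\in\overline{R(A)}$, and checking that the Hausdorff formulation behaves correctly for the possibly non-closed sets $W_{q,A}(T)$ and $W_{q,A}(S)$. The former follows from the decomposition $\mathcal{H}=\overline{R(A)}\oplus N(A)$ together with $T(N(A))\subseteq N(A)$, which shows $\|Tx\|_A$ depends only on $P_{\overline{R(A)}}x$ and hence reduces the estimate to the defining supremum over $\overline{R(A)}$. The latter is dealt with by arguing directly with the one-sided distances in the definition of $d_H$, so that no closedness is needed for the inequality itself—only for the concluding step that vanishing Hausdorff distance between bounded nonempty sets forces equality of their closures.
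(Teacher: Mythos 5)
Your proposal is correct and follows essentially the same route as the paper: the same Cauchy--Schwarz estimate $|\langle (T-U_nSU_n^{\#})x,y\rangle_A|\le \|T-U_nSU_n^{\#}\|_A$ over pairs $(x,y)\in\mathbb{S}_{q,A}(\mathcal{H})$, combined with the $A$-unitary invariance of Lemma \ref{propuni}, and then Proposition \ref{spectrum}(iii) for the spectral inclusion. The only cosmetic differences are your Hausdorff-distance packaging and your derivation of $w_{q,A}(T)=w_{q,A}(S)$ directly from the equality of closures, where the paper instead argues via subadditivity of $w_{q,A}$ together with the bound $w_{q,A}(\cdot)\le\|\cdot\|_A$.
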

\begin{proof}
If $\lambda \in W_{q,A}(T)$, there exist $(x,y) \in \mathbb{S}_{q, A}(\mathcal{H})$ such that $\lambda=\langle Tx, y \rangle_A$. Let $\{U_n\}_{n=1}^\infty$ is a sequence of $A$-unitary operators. Using Lemma \ref{propuni}, we have $\langle U_nSU_n^\# x,y \rangle_A \in W_q(S)$. Therefore,
\begin{align*}
    \lim_{n \to \infty}| \lambda -\langle U_nSU_n^\# x,y \rangle_A|= &\lim_{n \to \infty}| \langle (T-U_nSU_n^\#)x,y \rangle_A |\\
    \le & \lim_{n \to \infty}\| T-U_nSU_n^\#\|_A.
\end{align*}
Since $S$ and $T$ are $A$-approximately unitarily equivalent operators, we have $\lim_{n \to \infty}\| T-U_nSU_n^\#\|=0$ and $\lambda \in \overline{ W_{q,A}(S)}$. Thus, $\overline{ W_{q,A}(T)} \subseteq \overline{ W_{q,A}(S)}$. By interchanging $T$ and $S$, we get $\overline{ W_{q,A}(S)}\subseteq \overline{ W_{q,A}(T)}$. Therefore, $\overline{ W_{q,A}(T)}=\overline{ W_{q,A}(S)}$. Now, 
\begin{align*}
    w_{q,A}(T)= &w_{q,A}(T-U_nSU_n^\#+U_nSU_n^\#)\\
             \le & w_{q,A}(T-U_nSU_n^\#)+w_{q,A}(U_nSU_n^\#)\\
             \le & \|T-U_nSU_n^\#\|_A+w_{q,A}(S)\quad \text{(Using Lemma \ref{propuni})}.
\end{align*}
As $T$ and $S$ are $A$-approximately unitarily equivalent, $w_{q,A}(T) \le w_{q,A}(S)$ when $ n \to \infty.$ By interchanging $T$ and $S$, we get $w_{q,A}(S) \le w_{q,A}(T)$. Therefore, $w_{q,A}(T)=w_{q,A}(S)$. If $R(A)$ is closed, Proposition \ref{spectrum}(iii) yields, $ q\sigma(T) \subseteq \overline{ W_{q,A}(S)}$. 
\end{proof}
Considerable progress has been made in understanding the $q$-numerical range following the foundational work of N. K. Tsing, who established a key result known as the circular union formula for $W_q(T)$ ($q \in \mathcal{D}$) in \cite{tsing1984constrained}. This formula expresses the $q$-numerical range as
\begin{equation*}
 W_q(T)=\cup_x \left\lbrace z \in \mathbb{C}: \left| z-q \langle Tx,x \rangle \right| \le \sqrt{1-|q|^2}\left( \|Tx\|^2- \left| \langle Tx, x \rangle \right|^2 \right)^\frac{1}{2} \right\rbrace,   
\end{equation*}
where $x$ runs over all unit vectors. This characterization enabled Tsing to derive several important properties of $W_q(T)$, including its convexity.
Subsequently, M. T. Chien and H. Nakazato reformulated this expression into a more structured framework using the Davis-Wielandt shell, yielding the formula
\begin{equation}\label{suf}
W_q(T)= \cup_{z \in W(T)}\left\lbrace \xi \in \mathbb{C}: | \xi -qz| \le \sqrt{1-q^2}\sqrt{h(z)-|z|^2}\right \rbrace,     
\end{equation}
where $q \in [0,1]$, $h(z)=\sup\left \lbrace t \in \mathbb{R} : (z,t) \in DW(T) \right \rbrace$, and $DW(T)$ denotes the joint numerical range of $(T, T^*T)$, also known as the Davis-Wielandt shell of $T$. Within this framework, they investigated the $q$-numerical radius of weighted shift operators with periodic weights \cite{chien2007q} and explored the relationship between $q$-numerical range and the Davis-Wielandt shell \cite{chien2002davis}. In another notable contribution, H. Nakazato examined the boundary structure of $W_q(T)$ for normal matrices $T$ using the formula \eqref{suf} \cite{nakazato1995boundary}.
In our forthcoming result, we extend this circular union formula for the $A$-$q$-numerical range in the setting of semi-Hilbertian space.

\begin{theorem}\label{circf}
	Let $T \in \mathcal{B}_{A^\frac{1}{2}}(\mathcal{H})$, $\dim(R(A)) \ge 3$, and $q \in \mathcal{D}$. Then
	\[W_{q,A}(T)=\cup \{\mathcal{G}(x): x \in \mathcal{H}, \|x\|_A=1\},~ \] 
where $\mathcal{G}(x)=\{a \in\mathbb{C}: |a-q\langle Tx,x \rangle_A | \le \alpha \sqrt{1-|q|^2} \}$	
and $\alpha=\left(\|Tx\|_A^2-|\langle Tx,x \rangle_A|^2\right)^\frac{1}{2}.$
\end{theorem}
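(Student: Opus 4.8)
The plan is to prove the two set inclusions separately, the heart of the matter being the observation that, once $x$ is fixed, $\langle Tx,y\rangle_A$ collapses to a single scalar which sweeps out a disk as $y$ varies. Fix $x\in\mathcal{H}$ with $\|x\|_A=1$ and write the $A$-orthogonal decomposition $Tx=\langle Tx,x\rangle_A\,x+w$, where $\langle w,x\rangle_A=0$; a direct computation then gives $\|w\|_A^2=\|Tx\|_A^2-|\langle Tx,x\rangle_A|^2=\alpha^2$. I will treat the generic case $|q|<1$ first. By Remark \ref{recon}, every $y$ with $(x,y)\in\mathbb{S}_{q,A}(\mathcal{H})$ has the form $y=\overline{q}x+\sqrt{1-|q|^2}\,z$ with $\langle x,z\rangle_A=0$ and $\|z\|_A=1$, and conversely each such $z$ yields an admissible $y$. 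Expanding the semi-inner product and using $\langle x,z\rangle_A=0$ gives the key identity
\[
\langle Tx,y\rangle_A-q\langle Tx,x\rangle_A=\sqrt{1-|q|^2}\,\langle w,z\rangle_A .
\]

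For the inclusion $W_{q,A}(T)\subseteq\bigcup_x\mathcal{G}(x)$, I apply the Cauchy--Schwarz inequality for $\langle\cdot,\cdot\rangle_A$ to the right-hand side: $|\langle w,z\rangle_A|\le\|w\|_A\|z\|_A=\alpha$, whence $|\langle Tx,y\rangle_A-q\langle Tx,x\rangle_A|\le\alpha\sqrt{1-|q|^2}$, i.e. $\langle Tx,y\rangle_A\in\mathcal{G}(x)$. For the reverse inclusion I must show that every point of each $\mathcal{G}(x)$ is attained, that is, for any $c\in\mathbb{C}$ with $|c|\le\alpha$ there is a unit vector $z$, $A$-orthogonal to $x$, with $\langle w,z\rangle_A=c$; then $y=\overline{q}x+\sqrt{1-|q|^2}\,z$ satisfies $\langle Tx,y\rangle_A=q\langle Tx,x\rangle_A+\sqrt{1-|q|^2}\,c$, realizing the desired point of $\mathcal{G}(x)$.

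If $w=0$, then $\alpha=0$, $\mathcal{G}(x)$ is the single point $q\langle Tx,x\rangle_A$, and any $z$ furnished by Lemma \ref{flemma} works. If $w\neq0$, set $\hat w=w/\|w\|_A$ and invoke the hypothesis $\dim R(A)\ge3$: exactly as in the proof of Lemma \ref{flemma}, the orthogonal complement $(\mathrm{span}\{Ax,Aw\})^\perp$ cannot be contained in $N(A)$, for otherwise $\overline{R(A)}\subseteq\mathrm{span}\{Ax,Aw\}$ would force $\dim\overline{R(A)}\le2$. Hence there exists $v$ with $\|v\|_A=1$ and $\langle v,x\rangle_A=\langle v,w\rangle_A=0$. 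Writing $z=\beta\hat w+\gamma v$ with $\beta=\overline{c}/\alpha$ and $|\gamma|=\sqrt{1-|\beta|^2}$, the $A$-orthonormality of $\{x,\hat w,v\}$ yields $\|z\|_A=1$, $\langle z,x\rangle_A=0$, and $\langle w,z\rangle_A=\overline{\beta}\alpha=c$, as required.

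Finally, the degenerate case $|q|=1$ is handled directly: there $\sqrt{1-|q|^2}=0$, each $\mathcal{G}(x)$ reduces to $\{q\langle Tx,x\rangle_A\}$, and by Remark \ref{recon} one has $W_{q,A}(T)=qW_A(T)=\{q\langle Tx,x\rangle_A:\|x\|_A=1\}$, which is precisely $\bigcup_x\mathcal{G}(x)$. The main obstacle is the surjectivity step in the reverse inclusion: a single $z$ with $z\perp_A x$ can only reach the boundary circle $|\langle w,z\rangle_A|=\alpha$ when the ambient space is effectively two-dimensional, so filling the interior of the disk genuinely requires the third $A$-orthonormal direction $v$, and this is exactly the point at which the hypothesis $\dim R(A)\ge3$ is indispensable.
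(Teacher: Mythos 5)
Your proof is correct and follows essentially the same route as the paper's: decompose $Tx$ into its $A$-component along $x$ plus an $A$-orthogonal part, parametrize $y=\overline{q}x+\sqrt{1-|q|^2}\,z$, get the forward inclusion from Cauchy--Schwarz, and get the reverse inclusion by building $z$ from the normalized orthogonal part of $Tx$ together with a third $A$-orthonormal direction whose existence is exactly where $\dim R(A)\ge 3$ enters. The only (harmless) difference is bookkeeping: the paper first reduces to real $q\in[0,1]$ via the rotation property of $W_{q,A}$ from \cite{feki2024joint}, whereas you carry general complex $q$ with $|q|<1$ through the key identity directly, which makes your argument slightly more self-contained.
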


\begin{proof}
Considering Proposition 20(i) \cite{feki2024joint}, it is sufficient to examine the case where $0 \le q \le 1$. 
Let $x \in \mathcal{H}$ be any vector with $\|x\|_A=1$ and
	$$\mathcal{G}_1(x)=\{\langle {T}x,y \rangle_A: y \in \mathcal{H},\|y\|_A=1, \langle x,y \rangle_A=q\}.$$ Then,
	$
	W_{q,A}(T)=\cup \{\mathcal{G}_1(x): x \in \mathcal{H},\|x\|_A=1\}.$
	Our task is to prove that
	\begin{equation*}
		\mathcal{G}(x)= \mathcal{G}_1(x).
	\end{equation*}
If $q=1$, then similar to Remark \ref{recon}, we have $A^\frac{1}{2}x=A^\frac{1}{2}y$. Clearly, $\mathcal{G}(x)= \mathcal{G}_1(x).$    
 If $\alpha=0$, then $\mathcal{G}(x)=\{q\langle Tx,x \rangle_A\}$. Also,
\begin{eqnarray*}
 & &| \langle Tx,x \rangle_A|= \|Tx\|_A \\
& \Leftrightarrow & | \langle ATx,x \rangle|= \|A^\frac{1}{2}Tx\| \\
& \Leftrightarrow & | \langle A^\frac{1}{2}Tx,A^\frac{1}{2}x \rangle|= \|A^\frac{1}{2}Tx\|.
\end{eqnarray*}
The last equation implies the equality case in the Cauchy-Schwarz inequality. Thus, $A^\frac{1}{2}Tx=\lambda A^\frac{1}{2}x,$ for some $\lambda \in \mathbb{C}.$ Therefore, $\lambda=\langle Tx,x \rangle_A$ and
\[ \langle Tx,y \rangle_A= \langle A^{\frac{1}{2}}Tx,A^{\frac{1}{2}}y \rangle=\langle \lambda A^{\frac{1}{2}} x, A^{\frac{1}{2}}y \rangle=\lambda \langle x,y \rangle_A=q\lambda=q \langle Tx,x \rangle_A.\]
Thus, $\mathcal{G}_1(x)=\{q\langle Tx,x \rangle_A\}$. Therefore, $\mathcal{G}(x)=\mathcal{G}_1(x)$.
	
We may assume $q<1$ and $\alpha \neq 0$.  
 Let $\langle {T}x,y \rangle_A \in \mathcal{G}_1(x).$
 Then $(x,y) \in \mathbb{S}_{q, A}(\mathcal{H})$ and using Remark \ref{recon}, $y$ can be expressed as $y=qx+ \sqrt{1-q^2}z$, where $\|z\|_A=1$ and $\langle x,z \rangle_A =0$. 
Take $v=\frac{Tx-\langle Tx,x\rangle_Ax}{\alpha}$ such that $\|v\|_A=1$ and $v$ is $A$-orthogonal to $x$. Thus, ${T}x=\langle {T}x,x\rangle_A x+\alpha v$.
	In this setting, by using Cauchy-Schwarz's inequality for semi-Hilbertian spaces, we have
\begin{align*}
		|\langle {T}x,y \rangle_A-q\langle {T}x,x \rangle_A|=&|\langle {T}x,qx+ \sqrt{1-q^2}z \rangle_A-q\langle {T}x,x \rangle_A|\\
		= & \sqrt{1-q^2}|\langle {T}x,z \rangle_A|\\
		\le & \sqrt{1-q^2} \left| \langle \langle Tx,x \rangle_Ax+\alpha v ,z \rangle_A \right|\\
        = & \alpha \sqrt{1-q^2} |\langle v,z \rangle _A|\\
        \le &\alpha \sqrt{1-q^2} \|v\|_A\|z\|_A\\
        =&\alpha \sqrt{1-q^2}.
\end{align*}
	Thus, $\langle Tx, y \rangle_A \in \mathcal{G}(x)$. Therefore, $ \mathcal{G}_1(x) \subseteq \mathcal{G}(x).$ Conversely, let $a\in \mathcal{G}(x) $, $b=a-q \langle Tx,x\rangle_A$, and $\mathcal{W}=span\{x,v\}$, where $v=\frac{Tx-\langle Tx,x\rangle_Ax}{\alpha}$. Since $\dim(R(A)) \ge 3$, there exists $x_1 \in \mathcal{H}$ such that $Ax_1 \notin span\{Ax,Av\}$.
    Consider $w'=x_1-\langle x_1, x \rangle_A x-\langle x_1, v \rangle_A v$ and $w=\frac{w'}{\|w'\|_A}$. Then $w \in \mathcal{W}^{\perp_A}$ and $\|w\|_A=1$.
Also, consider $z=\frac{\overline{b}}{\alpha\sqrt{1-q^2}}v+ \sqrt{1-\left|\frac{\overline{b}}{\alpha\sqrt{1-q^2}}\right|^2}w$ and $y=qx+\sqrt{1-q^2}z$. Since $z$ is a vector that satisfies $\langle x,z \rangle_A =0$ and $\|z\|_A=1$, we have $\|y\|_A=1$ and $\langle x,y \rangle_A =q$. Hence,
	\begin{align*}
		\langle T x,y \rangle_A -q 	\langle T x,x \rangle_A=&\sqrt{1-q^2}	\langle T x,z \rangle_A\\
		=&\sqrt{1-q^2} \left \langle \langle {T}x,x\rangle_A x+\alpha v,~\frac{\overline{b}}{\alpha\sqrt{1-q^2}}v+ \sqrt{1-\left|\frac{\overline{b}}{\alpha \sqrt{1-q^2}}\right|^2}w \right \rangle_A\\
		=&\sqrt{1-q^2} \left\langle \alpha v,~\frac{\overline{b}}{\alpha \sqrt{1-q^2}}v \right \rangle_A\\
		=& b.
	\end{align*}
	Thus, $\langle T x,y \rangle_A-q\langle T x,x \rangle_A=b$ and $ a =\langle T x,y \rangle_A$ is in $ \mathcal{G}_1(x)$. 	Therefore, $ \mathcal{G}(x) \subseteq \mathcal{G}_1(x).$ Hence, the required result holds.
\end{proof}	
\begin{remark}
 As a direct consequence of the aforementioned theorem, we obtain that
\[
    W_{q,A}(T) = q W_A(T) \quad \text{if } \alpha = 0.
\]
\end{remark}
The following result proves that $W_{0,A}(T)$ is an open(or closed) disk centered at the origin. 
\begin{corollary}\label{corcir}
Let $T \in \mathcal{B}_{A^\frac{1}{2}}(\mathcal{H})$. Then
 $$W_{0,A}(T)=\cup_{\{x \in \mathcal{H}:\|x\|_A=1\}}\{a \in \mathbb{C}: |a| \le \left(\|Tx\|_A^2-|\langle Tx,x \rangle_A|^2\right)^\frac{1}{2}\}.$$   
\end{corollary}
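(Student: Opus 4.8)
The plan is to obtain this statement as an immediate specialization of the circular union formula in Theorem \ref{circf}. Since $0 \in \mathcal{D}$, I would apply that theorem with $q = 0$; this value already lies in the reduced range $0 \le q \le 1$ used in the proof of Theorem \ref{circf}, so no appeal to the rotational symmetry of $W_{q,A}(T)$ (Proposition 20(i) of \cite{feki2024joint}) is required. First I would write out the general disk $\mathcal{G}(x) = \{a \in \mathbb{C} : |a - q\langle Tx, x\rangle_A| \le \alpha\sqrt{1-|q|^2}\}$ with $\alpha = (\|Tx\|_A^2 - |\langle Tx,x\rangle_A|^2)^{1/2}$ and substitute $q = 0$.

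With $q = 0$ the center collapses to the origin, since $q\langle Tx,x\rangle_A = 0$, and the radius factor simplifies to $\sqrt{1-|q|^2} = 1$. Hence $\mathcal{G}(x)$ becomes precisely $\{a \in \mathbb{C} : |a| \le \alpha\}$, and taking the union over all $x \in \mathcal{H}$ with $\|x\|_A = 1$ reproduces the claimed expression verbatim. There is essentially nothing to compute beyond this substitution, so the body of the argument is a one-line dereferencing of Theorem \ref{circf}.

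The one point that deserves attention — and the only place a genuine obstacle could hide — is the standing hypothesis $\dim(R(A)) \ge 3$ required by Theorem \ref{circf}. That bound is used in the converse inclusion $\mathcal{G}(x) \subseteq \mathcal{G}_1(x)$ to produce a unit vector $w \in \mathcal{W}^{\perp_A}$ realizing the interior points of the disk, and the same mechanism is still needed when $q = 0$: for a point with $|b| < \alpha$ the $w$-component of the constructed vector $z$ does not vanish. I would therefore invoke Theorem \ref{circf} under its standing assumption $\dim(R(A)) \ge 3$, which the corollary inherits, and then the equality follows directly; the boundary points $|a| = \alpha$ require no auxiliary vector, and the forward containment $\mathcal{G}_1(x) \subseteq \mathcal{G}(x)$ is exactly the Cauchy–Schwarz estimate already carried out in the proof of the theorem.
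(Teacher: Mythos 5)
Your proposal is correct and takes exactly the paper's route: the paper's own proof is the same one-line specialization of Theorem \ref{circf} at $q=0$, noting that $\mathcal{G}(x)$ collapses to $\{a\in\mathbb{C}:|a|\le\alpha\}$ and taking the union over $A$-unit vectors $x$. Your additional remark that the corollary must inherit the standing hypothesis $\dim(R(A))\ge 3$ (needed for the inclusion $\mathcal{G}(x)\subseteq\mathcal{G}_1(x)$ at interior points) is a point the paper silently glosses over, since its statement of the corollary omits that hypothesis; your version is, if anything, slightly more careful.
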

\begin{proof}
 If $q=0$ in the above theorem, we have $\mathcal{G}(x)=\{ a \in \mathbb{C}: |a| \le \alpha\}$. Thus, the required result is obvious.   
\end{proof}

\begin{remark}
    In particular, for $A=I$ in Corollary \ref{corcir}, we have 
    \[W_{0}(T)=\cup_{\{x \in \mathcal{H}:\|x\|=1\}}\{a \in \mathbb{C}: |a| \le \left(\|Tx\|^2-|\langle Tx,x \rangle_A|\right)^\frac{1}{2}\},\]
    this is obtained in \cite{stolov1979hausdorff}.
\end{remark}

The following corollary gives a relation between the sets $W_A(T)$ and $W_{q,A}(T)$.
\begin{corollary}
Let $T \in \mathcal{B}_{A^{\frac{1}{2}}}(\mathcal{H})$, $\dim(R(A)) \geq 3$, and $q \in \mathcal{D}$. Then
$$q W_A(T) \subseteq W_{q,A}(T)$$. 
\end{corollary}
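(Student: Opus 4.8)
The plan is to derive the inclusion $q W_A(T) \subseteq W_{q,A}(T)$ directly from the circular union formula of Theorem \ref{circf}. Recall that $W_A(T) = \{\langle Tx, x\rangle_A : x \in \mathcal{H}, \|x\|_A = 1\}$ is the $A$-numerical range, so a typical element of $q W_A(T)$ has the form $q\langle Tx, x\rangle_A$ for some unit vector $x$. The key observation is that this point is precisely the \emph{center} of the disk $\mathcal{G}(x)$ appearing in Theorem \ref{circf}, namely $\mathcal{G}(x) = \{a \in \mathbb{C} : |a - q\langle Tx,x\rangle_A| \le \alpha\sqrt{1-|q|^2}\}$ with $\alpha = (\|Tx\|_A^2 - |\langle Tx,x\rangle_A|^2)^{1/2}$.

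\emph{First} I would fix an arbitrary $\mu \in q W_A(T)$ and write $\mu = q\langle Tx, x\rangle_A$ with $\|x\|_A = 1$. \emph{Then} I observe that the center of the disk $\mathcal{G}(x)$ is exactly this point $q\langle Tx, x\rangle_A$, so trivially $|\mu - q\langle Tx,x\rangle_A| = 0 \le \alpha\sqrt{1-|q|^2}$ regardless of the (necessarily nonnegative) value of the radius $\alpha\sqrt{1-|q|^2}$. Hence $\mu \in \mathcal{G}(x)$. \emph{Finally}, since Theorem \ref{circf} gives $W_{q,A}(T) = \bigcup\{\mathcal{G}(x) : \|x\|_A = 1\}$ and $\mathcal{G}(x)$ is one of the sets in this union, it follows that $\mu \in W_{q,A}(T)$. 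As $\mu$ was arbitrary, we conclude $q W_A(T) \subseteq W_{q,A}(T)$.

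I do not anticipate a genuine obstacle here, as the result is an immediate corollary: the center of each circular disk lies in the disk itself. The only point requiring minor care is confirming that the radius $\alpha\sqrt{1-|q|^2}$ is well-defined and nonnegative, which holds because $\alpha \ge 0$ by the Cauchy--Schwarz inequality in the semi-Hilbertian setting (guaranteeing $\|Tx\|_A^2 \ge |\langle Tx,x\rangle_A|^2$) and $|q| \le 1$ since $q \in \mathcal{D}$. One should also note that the hypothesis $\dim(R(A)) \ge 3$ is inherited directly from Theorem \ref{circf} and is what licenses the use of the circular union formula.
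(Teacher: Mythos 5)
Your proof is correct and takes essentially the same route as the paper: the paper's own proof likewise rests on Theorem \ref{circf} (after first disposing of the case $|q|=1$ via Remark \ref{recon}), and your explicit observation that each point $q\langle Tx,x\rangle_A$ is the center of the disk $\mathcal{G}(x)$, hence lies in the union, is exactly the step the paper leaves implicit. There is no gap; your uniform treatment of all $q\in\mathcal{D}$ even makes the paper's separate $|q|=1$ case unnecessary.
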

\begin{proof}
    If $|q| = 1$, then using Remark \ref{recon}, we have 
$W_{q,A}(T) = q W_A(T)$.
Moreover, Theorem~\ref{circf} implies 
$q W_A(T) \subseteq W_{q,A}(T)$. 
\end{proof}

In the following result, we present an alternative formulation of the $A$-$q$-numerical range and $A$-$q$-numerical radius, by employing a similar approach as \cite[Lemma 3.1]{stankovic2024some}.
\begin{proposition}\label{exp}
 For any $T \in \mathcal{B}_{A^\frac{1}{2}}(\mathcal{H})$ and $q \in \mathcal{D}$, we have
\begin{itemize}
    \item [(a)]  $W_{q,A}(T) = \left\{ q \langle Tx, x \rangle_A + \sqrt{1 - |q|^2} \langle Tx, z \rangle_A : \|x\|_A = \|z\|_A = 1, \, \langle x, z \rangle_A = 0 \right\},$

\item[(b)] $w_{q,A}(T) = \sup \left\{ |q|  |\langle Tx, x \rangle_A| + \sqrt{1 - |q|^2} |\langle Tx, z \rangle_A| : \|x\|_A = \|z\|_A = 1, \, \langle x, z \rangle_A = 0 \right\}.$
\end{itemize}  
\end{proposition}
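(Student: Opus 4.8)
The plan is to exploit the bijection between admissible pairs recorded in Lemma \ref{flemma} and Remark \ref{recon}, and then reduce the radius formula to a one-parameter phase optimization, mirroring \cite[Lemma 3.1]{stankovic2024some} in the semi-Hilbertian setting.

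For part (a), I would first recall that when $|q|<1$ the assignment $z \mapsto y = \overline{q}x + \sqrt{1-|q|^2}\,z$ is a one-to-one correspondence between the unit vectors $z$ satisfying $\langle x,z\rangle_A = 0$ and the unit vectors $y$ satisfying $\langle x,y\rangle_A = q$; indeed $\|y\|_A = 1$ and $\langle x,y\rangle_A = q$ follow directly from $\langle x,z\rangle_A = 0$ and $\|z\|_A = 1$. Substituting this $y$ into $\langle Tx,y\rangle_A$ and using conjugate-linearity in the second slot (so that $\overline{\overline{q}} = q$, and $\sqrt{1-|q|^2}$ is real) gives
\[
\langle Tx,y\rangle_A = q\langle Tx,x\rangle_A + \sqrt{1-|q|^2}\,\langle Tx,z\rangle_A,
\]
so the two descriptions of $W_{q,A}(T)$ coincide as $x$ ranges over unit vectors and the partner ranges over the admissible set. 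The endpoint $|q|=1$ must be handled separately: there $\sqrt{1-|q|^2}=0$, Remark \ref{recon} gives $W_{q,A}(T)=qW_A(T)$, and Lemma \ref{flemma} (using $\dim R(A)\ge 2$) guarantees that every unit $x$ admits some admissible $z$, so the right-hand set collapses to $\{q\langle Tx,x\rangle_A : \|x\|_A=1\} = qW_A(T)$ as well.

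For part (b), I would take the supremum of moduli over the representation from (a). The inequality $\le$ is immediate from the triangle inequality, since $|q\langle Tx,x\rangle_A + \sqrt{1-|q|^2}\langle Tx,z\rangle_A| \le |q|\,|\langle Tx,x\rangle_A| + \sqrt{1-|q|^2}\,|\langle Tx,z\rangle_A|$. For the reverse inequality, the key observation is that replacing $z$ by $e^{i\theta}z$ preserves both $\|z\|_A=1$ and $\langle x,z\rangle_A = 0$ (the latter because conjugate-linearity sends $\langle x,e^{i\theta}z\rangle_A$ to $e^{-i\theta}\cdot 0 = 0$), so $(x,e^{i\theta}z)$ stays admissible. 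Choosing $\theta$ to align the arguments of $q\langle Tx,x\rangle_A$ and $\sqrt{1-|q|^2}\langle Tx,e^{i\theta}z\rangle_A = \sqrt{1-|q|^2}\,e^{-i\theta}\langle Tx,z\rangle_A$ makes the modulus of their sum equal to $|q|\,|\langle Tx,x\rangle_A| + \sqrt{1-|q|^2}\,|\langle Tx,z\rangle_A|$; since by part (a) this sum lies in $W_{q,A}(T)$ and $w_{q,A}(T)$ is the supremum of moduli over $W_{q,A}(T)$, it dominates this value for every admissible $(x,z)$, yielding $\ge$ after taking the supremum.

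I expect the only genuinely delicate points to be the bookkeeping of conjugates in the semi-inner product (tracking which slot is conjugate-linear, so that the rotation $e^{i\theta}$ produces $e^{-i\theta}$) and the clean treatment of the degenerate case $|q|=1$, where $z$ disappears from the value but its existence must still be invoked via Lemma \ref{flemma}. The phase-rotation argument in (b) is then routine once the admissibility of $e^{i\theta}z$ has been verified.
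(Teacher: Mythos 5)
Your proposal is correct and follows essentially the same route as the paper: part (a) via the correspondence $y=\overline{q}x+\sqrt{1-|q|^2}\,z$ from Lemma \ref{flemma} and Remark \ref{recon} (with $|q|=1$ handled separately), and part (b) via the triangle inequality together with a phase rotation $z\mapsto e^{i\theta}z$ that preserves admissibility and forces equality in the triangle inequality. The only cosmetic difference is that you dominate the value for every admissible pair $(x,z)$ and then take the supremum, whereas the paper runs an equivalent $\epsilon$-argument with a near-optimal pair.
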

\begin{proof}
If \( |q| = 1 \), then by Remark \ref{recon}, the result follows immediately.
Let $|q|< 1$ and $
(x,y) \in \mathbb{S}_{q, A}(\mathcal{H})$. Using $y =\overline{q}x+\sqrt{1-|q|^2}z$, with $\|z\|_A=1$ and $\langle x,z \rangle_A=0$, we can obtain the part $(a)$ easily. Also, part $(a)$ implies, 
\[
w_{q,A}(T) = \sup \left\{ \left| q \langle Tx, x \rangle_A  + \sqrt{1 - |q|^2} \langle Tx, z \rangle_A \right| : \|x\|_A = \|z\|_A = 1, \, \langle x, z \rangle_A = 0 \right\}.
\]

Take
\[
\mu = \sup \left\{ |q|  |\langle Tx, x \rangle_A| + \sqrt{1 - |q|^2}  |\langle Tx, z \rangle_A| : \|x\|_A = \|z\|_A = 1, \, \langle x, z \rangle_A = 0 \right\}.
\]
Clearly, $w_{q,A}(T) \leq \mu$. Let $\epsilon > 0$ be any arbitrary number. Then, there exist $x, z \in \mathcal{H}$ with $\|x\|_A = \|z\|_A = 1$, $\langle x, z \rangle_A = 0$ such that
\begin{eqnarray*}
 |q|  |\langle Tx, x \rangle_A| + \sqrt{1 - |q|^2}  |\langle Tx, z \rangle_A| > \mu - \epsilon,  \\
 |q \langle Tx, x \rangle_A| + \sqrt{1 - |q|^2}  |\langle Tx, z \rangle_A| > \mu - \epsilon.
\end{eqnarray*}

Considering the case of equality in the triangle inequality of complex numbers, we can choose $\theta \in \mathbb{R}$ such that the vector $z_1 = e^{i\theta} z \in \mathcal{H}$. Hence
\[
|q \langle Tx, x \rangle_A| + \sqrt{1 - |q|^2} |\langle Tx, z_1 \rangle_A| = \left| q \langle Tx, x \rangle_A + \sqrt{1 - |q|^2} \langle Tx, z_1 \rangle_A \right|.
\]
Finally, 
\begin{align*}
 w_{q,A}(T) \geq & \left| q \langle Tx, x \rangle_A + \sqrt{1 - |q|^2} \langle Tx, z_1 \rangle_A \right|\\
 =& |q \langle Tx, x \rangle_A| + \sqrt{1 - |q|^2} |\langle Tx, z_1 \rangle_A|\\
 >&\mu - \epsilon.   
\end{align*}

Thus, $w_{q,A}(T) \geq \mu $.
\end{proof}
In the forthcoming result, we obtain some relations between $w_{q, A}(T)$ and $\|T\|_A$.
\begin{corollary}\label{maincor}
    Let $q \in \mathcal{D}$. Then 
 \begin{itemize}
     \item [(i)] $|q| w_A(T) \le w_{q,A}(T)$ for $T \in \mathcal{B}_{A^\frac{1}{2}}(\mathcal{H})$,
     \item [(ii)] $\frac{|q|}{2} \|T\|_A\le w_{q,A}(T) \le \|T\|_A$ for $T \in \mathcal{B}_{A}(\mathcal{H})$,
 \end{itemize}     
 If $T$ is $A$-self-adjoint operator, then
 \begin{itemize}
     \item [(iii)] $|q| w_A(T) \le w_{q,A}(T) \le w_A(T)$ or $|q| \|T\|_A \le w_{q,A}(T) \le \|T\|_A$ for $T \in \mathcal{B}_{A}(\mathcal{H})$.
 \end{itemize}
\end{corollary}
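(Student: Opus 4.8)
The plan is to derive all three parts from the supremum formula in Proposition \ref{exp}(b), combined with two standard facts about the $A$-numerical radius that I would cite from the literature: that $\tfrac{1}{2}\|T\|_A \le w_A(T) \le \|T\|_A$ for every $T \in \mathcal{B}_A(\mathcal{H})$, and that $w_A(T) = \|T\|_A$ whenever $T$ is $A$-self-adjoint \cite{zamani2019numerical}. With these in hand the corollary becomes a short assembly rather than a fresh estimate.

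For part (i) I would start from the expression in Proposition \ref{exp}(b). Given any $x \in \mathcal{H}$ with $\|x\|_A = 1$, Lemma \ref{flemma} (applicable since $\dim R(A) \ge 2$ is standing in this section) produces a unit vector $z$ with $\langle x, z\rangle_A = 0$; for this admissible pair the bracketed quantity is at least $|q|\,|\langle Tx, x\rangle_A|$, because the remaining summand $\sqrt{1-|q|^2}\,|\langle Tx, z\rangle_A|$ is nonnegative. Taking the supremum over all such $x$ then gives $w_{q,A}(T) \ge |q|\,\sup_{\|x\|_A=1}|\langle Tx, x\rangle_A| = |q|\,w_A(T)$, which is (i).

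For part (ii), the upper bound is immediate: writing $w_{q,A}(T) = \sup\{|\langle Tx, y\rangle_A| : (x,y)\in\mathbb{S}_{q,A}(\mathcal{H})\}$ and applying the Cauchy--Schwarz inequality for $\langle\cdot,\cdot\rangle_A$ yields $|\langle Tx, y\rangle_A| \le \|Tx\|_A\,\|y\|_A \le \|T\|_A$. For the lower bound I would chain part (i) with the classical estimate $w_A(T)\ge \tfrac12\|T\|_A$, so that $w_{q,A}(T) \ge |q|\,w_A(T) \ge \tfrac{|q|}{2}\|T\|_A$. Part (iii) then specializes to the $A$-self-adjoint case, where $w_A(T)=\|T\|_A$: the lower bound $|q|\,w_A(T)\le w_{q,A}(T)$ is again part (i), while the upper bound $w_{q,A}(T)\le w_A(T)$ is part (ii)'s upper bound with $\|T\|_A$ rewritten as $w_A(T)$. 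The two displayed chains in (iii) are thus literally the same inequalities, one phrased with $w_A(T)$ and the other with $\|T\|_A$.

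The individual steps are short, so there is no deep obstacle; the only genuine input beyond Proposition \ref{exp}(b) is the cited pair of facts about $w_A(\cdot)$. The one point requiring care is part (i): one must invoke Lemma \ref{flemma} to guarantee that a companion unit vector $z$ with $\langle x,z\rangle_A=0$ exists for each $x$, so that the pure diagonal value $|q|\,|\langle Tx,x\rangle_A|$ genuinely arises inside the supremum of Proposition \ref{exp}(b). The conceptual move worth flagging is routing the upper bound of (iii) through the identity $w_A(T)=\|T\|_A$ rather than attempting to estimate the off-diagonal term $\langle Tx,z\rangle_A$ directly, which would otherwise require reducing to the Hermitian $2\times 2$ compression of $T$ on $\mathrm{span}\{x,z\}$ and invoking the elliptical-disk description; the chain through $\|T\|_A$ avoids that entirely.
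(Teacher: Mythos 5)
Your proposal is correct and follows essentially the same route as the paper: part (i) via the supremum formula of Proposition \ref{exp} (dropping the nonnegative off-diagonal term), the upper bound in (ii) via Cauchy--Schwarz, the lower bound by chaining (i) with Zamani's estimate $\tfrac12\|T\|_A \le w_A(T)$, and (iii) via the identity $w_A(T)=\|T\|_A$ for $A$-self-adjoint operators. If anything, your write-up is slightly more careful than the paper's, since you explicitly cite part (b) of Proposition \ref{exp} (the paper cites part (a), which alone does not control the sum of moduli) and you invoke Lemma \ref{flemma} to guarantee the companion vector $z$ exists.
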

\begin{proof}
 Proposition \ref{exp}$(a)$ implies, $|q| |\langle Tx, x \rangle_A| \le |q| |\langle Tx, x \rangle_A| + \sqrt{1 - |q|^2}  |\langle Tx, z \rangle_A| \le w_{q,A}(T)$. Thus, we have $|q|w_A(T) \le w_{q,A}(T)$. Clearly, $w_{q,A}(T) \le \|T\|_A$. Using this relation and Corollary 2.8 \cite{zamani2019numerical}, we obtain $\frac{|q|}{2} \|T\|_A\le w_{q,A}(T) \le \|T\|_A$. Moreover, if $T$ is $A$-self-adjoint operator, we have $w_A(T)=\|T\|_A$ \cite{zamani2019numerical}. Therefore, $|q| \|T\|_A \le w_{q,A}(T) \le \|T\|_A.$   
\end{proof}
	
\begin{remark}
Corollary \ref{maincor} unifies several existing results in the literature. 
\begin{itemize}
\item [(i)] For $q=1$ and $A=I$, Corollary \ref{maincor}(ii) gives the following well-known inequality for $T \in \mathcal{B(H)}$  
\[\frac{\|T\|}{2}\le w(T) \le \|T\|.\] 
If $T$ is self-adjoint Corollary \ref{maincor}(iii) implies, $w(T)=\|T\|$, which is mentioned in \cite{kittaneh2003numerical}.

\item[(ii)] For $q=1$ and $T \in \mathcal{B}_{A^\frac{1}{2}}(\mathcal{H})$, from Corollary \ref{maincor}(ii) we have \[
        \frac{1}{2}\|T\|_A \le w_{A}(T) \le \|T\|_A,
\]
 which is obtained in \cite[Corollary 2.8]{zamani2019numerical}. If $T$ is $A$-self adjoint then from Corollary \ref{maincor}(iii), we have $w_A(T)=\|T\|_A$ \cite[Lemma 2.1]{zamani2019numerical}.

 \item [(iii)]For $A=I$ and $T \in \mathcal{B(H)}$, from Corollary \ref{maincor}(i) and Corollary \ref{maincor} (ii), respectively, we have
 \begin{align*}
 &|q|w(T) \le w_q(T)\\
&\frac{|q|}{2}\|T\|\le w_{q}(T)\le \|T\| 
 \end{align*}
 which are obtained in \cite[inequality (5)]{stankovic2024some} and \cite[inequality (6)]{stankovic2024some} respectively. If $T$ is $A$-self adjoint, then from Corollary \ref{maincor}(iii), we have 
 \begin{equation*}
     |q|\|T\| \le w_q(T) \le \|T\|
 \end{equation*}
 which is the extension of \cite[inequality (7)]{stankovic2024some} for self-adjoint operators.
\end{itemize}	 
\end{remark}


\begin{corollary}
 Let $T \in \mathcal{B}_{A}(\mathcal{H})$ and $q \in \mathcal{D}\setminus \{0\}$. Then
 $
  \lim_{n \to \infty}\left(w_{q,A}(T^n)\right)^\frac{1}{n}=r_A(T).   
 $
\end{corollary}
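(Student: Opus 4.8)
The plan is to sandwich $w_{q,A}(T^n)$ between two multiples of $\|T^n\|_A$ and then take $n$-th roots, using that the multiplicative constants tend to $1$. First I would observe that since $\mathcal{B}_{A}(\mathcal{H})$ is a subalgebra of $\mathcal{B}(\mathcal{H})$, the hypothesis $T \in \mathcal{B}_{A}(\mathcal{H})$ guarantees $T^n \in \mathcal{B}_{A}(\mathcal{H})$ for every $n \in \mathbb{N}$, so that Corollary \ref{maincor}(ii) applies to each power $T^n$. Applying that corollary with $T$ replaced by $T^n$ yields
\[
\frac{|q|}{2}\,\|T^n\|_A \le w_{q,A}(T^n) \le \|T^n\|_A.
\]

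Next I would raise this chain of inequalities to the power $1/n$, obtaining
\[
\left(\frac{|q|}{2}\right)^{1/n}\|T^n\|_A^{1/n} \le \left(w_{q,A}(T^n)\right)^{1/n} \le \|T^n\|_A^{1/n}.
\]
Here the assumption $q \ne 0$ enters crucially: it guarantees $|q|/2 > 0$, so that $\left(|q|/2\right)^{1/n} \to 1$ as $n \to \infty$. For $q = 0$ the lower bound collapses to $0$ and the squeeze argument would fail, which is exactly why the hypothesis excludes the origin.

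Finally I would invoke the definition of the $A$-spectral radius, $r_A(T) = \lim_{n \to \infty}\|T^n\|_A^{1/n}$ from \eqref{srf}, so that both outer quantities in the displayed chain converge to $r_A(T)$. The squeeze theorem then forces
\[
\lim_{n \to \infty}\left(w_{q,A}(T^n)\right)^{1/n} = r_A(T),
\]
as claimed. I do not anticipate a genuine obstacle here, since the statement is a direct consequence of the two-sided estimate in Corollary \ref{maincor}(ii); the only point requiring care is the role of the hypothesis $q \ne 0$ in keeping the lower bound nondegenerate, together with the standard fact that the defining limit $\lim_{n\to\infty}\|T^n\|_A^{1/n}$ exists and equals $r_A(T)$.
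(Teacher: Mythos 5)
Your proposal is correct and follows essentially the same route as the paper's own proof: both apply Corollary \ref{maincor}(ii) to each power $T^n$ (using that $\mathcal{B}_A(\mathcal{H})$ is a subalgebra), take $n$-th roots, and squeeze using $\bigl(|q|/2\bigr)^{1/n}\to 1$ together with the $A$-spectral radius formula \eqref{srf}. Your explicit remark on why $q\neq 0$ is indispensable is a nice touch that the paper states only in passing.
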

\begin{proof}
    Since, $T \in \mathcal{B}_A(\mathcal{H})$ implies, $T^n \in \mathcal{B}_A(\mathcal{H})$. From Corollary \ref{maincor}, we have
 \begin{equation*}
 \frac{|q|}{2} \|T^n\|_A\le w_{q,A}(T^n) \le \|T^n\|_A.
 \end{equation*}
 This implies,
 \begin{equation*}
 \lim_{ n\to \infty}\left(\frac{|q|}{2}\right)^\frac{1}{n} \left(\|T^n\|_A\right)^\frac{1}{n}\le \lim_{n \to \infty}\left(w_{q,A}(T^n)\right)^\frac{1}{n} \le \lim_{n \to \infty}\left(\|T^n\|_A\right)^\frac{1}{n}.
 \end{equation*}
As $q \ne 0$, it follows from equation \eqref{srf} that
\begin{equation*}
 \lim_{n \to \infty}\left(w_{q,A}(T^n)\right)^\frac{1}{n}=r_A(T).   
\end{equation*}
\end{proof}

Next, we establish a relation between the $A$-$q$-numerical range of $T$ in semi-Hilbert space $\mathcal{H}$ and $q$-numerical range of $T$ in Hilbert space $\textbf{R}(A^{1/2})$.
The following lemma is essential in this regard.  
\begin{lemma}\label{lemma-tilde}\cite{arias2009lifting}
 Let \( T \in \mathcal{B}(\mathcal{H}) \). Then \( T \in \mathcal{B}_{A^{1/2}}(\mathcal{H}) \) if and only if there exists a unique \( \widetilde{T} \in \mathcal{B}(\textbf{R}(A^{1/2})) \) such that \( Z_A T = \widetilde{T} Z_A \). Here, \( Z_A : \mathcal{H} \to \textbf{R}(A^{1/2}) \) is defined by \( Z_A x = Ax \) for all \( x \in \mathcal{H} \).   
\end{lemma}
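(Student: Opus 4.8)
The plan is to treat the map $Z_A\colon\mathcal{H}\to\textbf{R}(A^{1/2})$, $Z_Ax=Ax$, as an isometry from the semi-Hilbertian space $(\mathcal{H},\|\cdot\|_A)$ into the genuine Hilbert space $\textbf{R}(A^{1/2})$, and then to transport the $A$-boundedness condition on $T$ back and forth across this isometry. The single computation that drives everything is the norm identity $\|Z_Ax\|_{\textbf{R}(A^{1/2})}=\|x\|_A$ for all $x\in\mathcal{H}$. First I would verify it: writing $Z_Ax=Ax=A^{1/2}(A^{1/2}x)$ and applying the defining inner product of $\textbf{R}(A^{1/2})$ gives $\|Z_Ax\|^2=\langle P_{\overline{R(A)}}A^{1/2}x,\,P_{\overline{R(A)}}A^{1/2}x\rangle$, and since $A^{1/2}P_{\overline{R(A)}}=P_{\overline{R(A)}}A^{1/2}=A^{1/2}$, this collapses to $\|A^{1/2}x\|^2=\langle Ax,x\rangle=\|x\|_A^2$. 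I would also record that the range $R(Z_A)=R(A)$ is dense in $\textbf{R}(A^{1/2})$, since the latter is (isometrically) the completion of $R(A^{1/2})$ and $R(A)$ is dense there.

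For the forward implication, assuming $T\in\mathcal{B}_{A^{1/2}}(\mathcal{H})$ with $\|Tx\|_A\le c\|x\|_A$, I would define $\widetilde{T}$ on the dense subspace $R(Z_A)=R(A)$ by $\widetilde{T}(Z_Ax)=Z_A(Tx)$, i.e.\ $\widetilde{T}(Ax)=A(Tx)$, and then extend it by continuity. The step requiring genuine care is well-definedness, because $Z_A$ is not injective (its kernel is $N(A)$): if $Z_Ax=Z_Ax'$ then $\|x-x'\|_A=\|Z_A(x-x')\|=0$, whence $\|T(x-x')\|_A\le c\|x-x'\|_A=0$, so $A T(x-x')=0$ and therefore $Z_A(Tx)=Z_A(Tx')$. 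Boundedness on $R(A)$ then follows from the same norm identity, since $\|\widetilde{T}(Z_Ax)\|=\|Z_A(Tx)\|=\|Tx\|_A\le c\|x\|_A=c\|Z_Ax\|$; this bounded densely-defined operator extends uniquely to $\widetilde{T}\in\mathcal{B}(\textbf{R}(A^{1/2}))$, and the intertwining relation $Z_AT=\widetilde{T}Z_A$, valid on $R(A)$, persists on all of $\textbf{R}(A^{1/2})$ by density and continuity.

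The reverse implication is immediate from the norm identity: given such a $\widetilde{T}$, one has $\|Tx\|_A=\|Z_A(Tx)\|=\|\widetilde{T}Z_Ax\|\le\|\widetilde{T}\|\,\|Z_Ax\|=\|\widetilde{T}\|\,\|x\|_A$, so $T$ satisfies the Douglas-type estimate and lies in $\mathcal{B}_{A^{1/2}}(\mathcal{H})$. Uniqueness is a one-line density argument: any two operators intertwining $T$ through $Z_A$ coincide on $R(Z_A)=R(A)$, hence on the whole space. I expect the only real obstacle to be the well-definedness of $\widetilde{T}$, which is exactly the point where the seminorm (as opposed to a norm) must be handled; once the isometry-with-dense-range picture is in place, the rest is routine.
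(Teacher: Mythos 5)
Your proof is correct. Note that the paper itself gives no proof of this lemma---it is quoted directly from \cite{arias2009lifting}---and your argument (the isometry identity $\|Z_Ax\|_{\textbf{R}(A^{1/2})}=\|x\|_A$, well-definedness of $\widetilde{T}$ on the dense subspace $R(A)$ despite $N(Z_A)=N(A)\neq\{0\}$, extension by continuity, the reverse estimate via Douglas's characterization of $\mathcal{B}_{A^{1/2}}(\mathcal{H})$, and uniqueness by density) is precisely the standard lifting construction of the cited reference, with the one genuinely delicate point (factoring through the seminorm kernel) handled correctly.
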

\begin{theorem}\label{tilde}
   Let $T \in \mathcal{B}_{A^\frac{1}{2}}(\mathcal{H})$. Then
$W_{q,A}(T) \subseteq W_q(\widetilde{T})$.
\end{theorem}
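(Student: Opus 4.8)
The plan is to transport the data defining a point of $W_{q,A}(T)$ through the intertwining map $Z_A$ of Lemma~\ref{lemma-tilde} and to recognize its image as a point of $W_q(\widetilde{T})$. The whole argument rests on a single observation: $Z_A$ carries the semi-inner product $\langle\cdot,\cdot\rangle_A$ on $\mathcal{H}$ to the genuine inner product $(\cdot,\cdot)$ on $\textbf{R}(A^{1/2})$.

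First I would record that identity. Writing $P=P_{\overline{R(A)}}$ and using $Z_A x = Ax = A^{1/2}(A^{1/2}x)$ together with the defining formula $(A^{1/2}u, A^{1/2}v)=\langle Pu, Pv\rangle$, I compute
\[
(Z_A x, Z_A y)=\big(A^{1/2}(A^{1/2}x),\,A^{1/2}(A^{1/2}y)\big)=\langle P A^{1/2}x,\,P A^{1/2}y\rangle=\langle A^{1/2}x, A^{1/2}y\rangle=\langle x,y\rangle_A,
\]
where the third equality uses the Moore--Penrose identity $P A^{1/2}=A^{1/2}$. In particular $\|Z_A x\|=\|x\|_A$ for every $x$, so $Z_A$ sends $A$-unit vectors to unit vectors of $\textbf{R}(A^{1/2})$ and preserves the angle constraint $\langle x,y\rangle_A=q$.

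Next I would take an arbitrary $\lambda\in W_{q,A}(T)$ and choose $(x,y)\in\mathbb{S}_{q,A}(\mathcal{H})$ with $\lambda=\langle Tx,y\rangle_A$. Setting $u=Z_A x$ and $v=Z_A y$, which lie in $R(A)\subseteq\textbf{R}(A^{1/2})$, the inner-product identity above gives $\|u\|=\|v\|=1$ and $(u,v)=\langle x,y\rangle_A=q$. Invoking the intertwining relation $\widetilde{T}Z_A=Z_A T$ of Lemma~\ref{lemma-tilde},
\[
(\widetilde{T}u,v)=(\widetilde{T}Z_A x, Z_A y)=(Z_A Tx, Z_A y)=\langle Tx,y\rangle_A=\lambda,
\]
which exhibits $\lambda$ as a point of $W_q(\widetilde{T})$. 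As $\lambda$ was arbitrary, $W_{q,A}(T)\subseteq W_q(\widetilde{T})$.

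The argument is routine once the inner-product identity is established; the only point demanding care—and the reason one obtains an inclusion rather than an equality—is that $Z_A$ need not be surjective onto $\textbf{R}(A^{1/2})$, its range being the (merely dense) subspace $R(A)$. Thus every pair realizing a value of $W_{q,A}(T)$ yields an admissible pair $(u,v)$ for $\widetilde{T}$, but the unit vectors of $\textbf{R}(A^{1/2})$ entering $W_q(\widetilde{T})$ need not all be of the form $Ax$, so the reverse inclusion does not follow by this method.
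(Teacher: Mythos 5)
Your proof is correct and follows essentially the same route as the paper: both arguments rest on the identity $(Ax,Ay)=\langle x,y\rangle_A$ (i.e., $Z_A$ carries $\mathbb{S}_{q,A}(\mathcal{H})$ into admissible pairs in $\textbf{R}(A^{1/2})$) combined with the intertwining relation $Z_AT=\widetilde{T}Z_A$ from Lemma~\ref{lemma-tilde}. Your element-wise formulation is in fact a cleaner presentation of the paper's set-level computation, and your closing remark correctly identifies why only an inclusion holds in general (namely $R(A)\subseteq R(A^{1/2})$ may be proper), which is exactly the point addressed by the paper's Corollary~\ref{eqcor} when $R(A)$ is closed.
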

\begin{proof}
Let $\widetilde{T} \in \mathcal{B}(\textbf{R}(A^\frac{1}{2}))$. Then, by using decomposition $\mathcal{H}= \overline{R(A^{1/2})} \oplus N(A^{1/2})$,  we get
\begin{align*}  
&W_q(\widetilde{T})= \{ (\widetilde{T}x, y) :x, y \in \textbf{R}(A^{1/2}), \|x\|_{\textbf{R}(A^{1/2})} = 1= \|y\|_{\textbf{R}(A^{1/2})}, (x,y)_{\textbf{R}(A^{1/2})}=q \} \\  
&= \{ (\widetilde{T}A^{1/2}x, A^{1/2}y) : x,y \in \mathcal{H}, \|A^{1/2}x\|_{\textbf{R}(A^{1/2})} = 1= \|A^{1/2}y\|_{\textbf{R}(A^{1/2})}, ( A^{1/2}x,A^{1/2}y)_{\textbf{R}(A^{1/2})} = q\} \\  
&= \{ (\widetilde{T}A^{1/2}x, A^{1/2}y) : x ,y\in \overline{R(A^{1/2})}, \|A^{1/2}x\|_{\textbf{R}(A^{1/2})} = 1=\|A^{1/2}y\|_{\textbf{R}(A^{1/2})}, (A^{1/2}x,A^{1/2}y)_{\textbf{R}(A^{1/2})}=q  \}. 
\end{align*}  
Also, by using decomposition $\mathcal{H}= \overline{R(A^{1/2})} \oplus N(A^{1/2})$ and Lemma \ref{lemma-tilde}, we have
\begin{align*}
W_{q,A}(T) &= \{ \langle Tx, y \rangle_A : x,y \in \mathcal{H}, \|x\|_A = 1=\|y\|_A, \langle x,y \rangle_A=q \} \\  
&= \{ (ATx, Ay) : x,y \in \mathcal{H}, \|Ax\|_{\textbf{R}(A^{1/2})} = 1=\|Ay\|_{\textbf{R}(A^{1/2})},(Ax,Ay)_{\textbf{R}(A^{1/2})}=q \} \\  
&= \{ (\widetilde{T}Ax, Ay) : x,y \in \overline{R(A^{1/2})}, \|Ax\|_{\textbf{R}(A^{1/2})} = 1=\|Ay\|_{\textbf{R}(A^{1/2})},(Ax,Ay)_{\textbf{R}(A^{1/2})}=q \}.
\end{align*} 
 Thus, $W_{q,A}(T)\subseteq W_q(\widetilde{T}).$
\end{proof}
Using the fact that $R(A)=R(A^\frac{1}{2})$, when $R(A)$ is closed, we can obtain the forthcoming corollary easily.
\begin{corollary}\label{eqcor}
 Let $R(A)$ is closed. Then $W_{q,A}(T)=W_q(\widetilde{T}).$ 
\end{corollary}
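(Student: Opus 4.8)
The plan is to leverage Theorem~\ref{tilde}, which already gives the inclusion $W_{q,A}(T) \subseteq W_q(\widetilde{T})$ in full generality, so the only remaining work is to establish the reverse inclusion $W_q(\widetilde{T}) \subseteq W_{q,A}(T)$ under the additional hypothesis that $R(A)$ is closed. The key structural fact I would exploit is the one flagged in the sentence preceding the corollary: when $R(A)$ is closed we have $R(A) = R(A^{1/2})$, so in particular $\overline{R(A^{1/2})} = R(A^{1/2}) = R(A)$, and the Hilbert space $\mathbf{R}(A^{1/2})$ consists precisely of vectors of the form $A^{1/2}x$ (equivalently $Ax$) with $x \in \overline{R(A)}$.

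First I would take an arbitrary element of $W_q(\widetilde{T})$ and, using the first chain of set-equalities already derived inside the proof of Theorem~\ref{tilde}, write it as $(\widetilde{T}A^{1/2}x, A^{1/2}y)$ for suitable $x,y \in \overline{R(A^{1/2})}$ satisfying the normalization $\|A^{1/2}x\|_{\mathbf{R}(A^{1/2})} = \|A^{1/2}y\|_{\mathbf{R}(A^{1/2})} = 1$ and the constraint $(A^{1/2}x, A^{1/2}y)_{\mathbf{R}(A^{1/2})} = q$. The goal is to produce a genuine pair realizing the same complex number as an element of $W_{q,A}(T)$. The crucial point is that, because $R(A)$ is closed, the map $x \mapsto A^{1/2}x$ and the map $x \mapsto Ax$ have the same range $R(A) = R(A^{1/2})$, so every generator $A^{1/2}x$ appearing in the description of $W_q(\widetilde{T})$ can equally well be written as $Av$ for some $v \in \overline{R(A)}$; translating the $\mathbf{R}(A^{1/2})$-inner-product conditions through the defining identity $(A^{1/2}x, A^{1/2}y) = \langle P_{\overline{R(A)}}x, P_{\overline{R(A)}}y\rangle$ into the semi-inner product $\langle \cdot,\cdot\rangle_A$ then matches them exactly with the membership conditions for $\mathbb{S}_{q,A}(\mathcal{H})$. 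Finally, the relation $Z_A T = \widetilde{T} Z_A$ from Lemma~\ref{lemma-tilde} identifies $\widetilde{T}(Av) = AT v$, so the value $(\widetilde{T}Av, Aw)_{\mathbf{R}(A^{1/2})}$ equals $\langle Tv, w\rangle_A$, placing it in $W_{q,A}(T)$.

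Concretely, I would show that under closedness of $R(A)$ the two set-descriptions obtained in the proof of Theorem~\ref{tilde} literally coincide: the set $\{(\widetilde{T}A^{1/2}x, A^{1/2}y)\}$ and the set $\{(\widetilde{T}Ax, Ay)\}$ are the same because $A^{1/2}$ and $A$ have identical ranges and both generate $\mathbf{R}(A^{1/2})$ with the norm conditions and $q$-constraint transporting faithfully between the two parametrizations. Thus both inclusions hold and $W_{q,A}(T) = W_q(\widetilde{T})$.

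The main obstacle I anticipate is purely the bookkeeping of identifying the two generating families and verifying that the normalization and orthogonality-type constraints are preserved under the substitution $A^{1/2}x \leftrightarrow Av$; this rests entirely on the equality $R(A) = R(A^{1/2})$ valid for closed range, together with the isometric description of the inner product on $\mathbf{R}(A^{1/2})$. No genuinely new estimate is needed beyond Theorem~\ref{tilde}, so the corollary should follow once the range identification is made explicit, which is exactly why the authors remark it "can be obtained easily."
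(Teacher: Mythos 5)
Your proposal is correct and follows essentially the same route as the paper: the paper's entire argument is the remark preceding the corollary, namely that closedness of $R(A)$ gives $R(A)=R(A^{1/2})$, so the two parametrizations of $W_{q,A}(T)$ and $W_q(\widetilde{T})$ derived in the proof of Theorem~\ref{tilde} describe the same set. You have merely made explicit the bookkeeping (substituting $A^{1/2}x \leftrightarrow Av$ and transporting the norm and $q$-constraints through $(Av,Aw)_{\mathbf{R}(A^{1/2})}=\langle v,w\rangle_A$ and $\widetilde{T}Z_A=Z_AT$) that the authors leave to the reader.
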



We now provide an example demonstrating that the assumption of the closedness of \( R(A) \) is not necessary for Corollary \ref{eqcor}.
\begin{example}
Let $\{e_n\}_{n=1}^\infty$ denote the standard orthonormal basis of the Hilbert space $\ell_2$. Let $A: \ell_2 \to  \ell_2$ be the bounded linear operator defined as
\begin{equation*}
    A(e_n)=\frac{1}{n^p}e_n,
\end{equation*}
where $p \ge 2$. In this case, $A$ is a positive operator and $R(A)$ is not closed. Consider the bounded linear operator $T: \ell_2 \to  \ell_2$ defined as
\begin{equation*}
    Te_n=c  e_n
\end{equation*}
where $c$ is an arbitrary constant and $|c| \le 1$. By applying Lemma \ref{lemma-tilde}, we obtain $\widetilde{T} \in \mathcal{B}(\textbf{R}(A^{1/2}))$ defined as
\begin{equation*}
 \widetilde{T}e_n= c e_n.   
\end{equation*}
Then it is easy to verify that 
\begin{equation*}
 W_{q,A}(T)=W_q(\widetilde{T})=\{cq\}.   
\end{equation*}
\end{example}

Before proving our next result, first we state the following results.

\begin{lemma}\label{slemma} \cite[Theorem 4.2]{majumdar2024approximate}
Let $T \in \mathcal{B}_{A}(\mathcal{H})$. Then $T$ is $A$-selfadjoint if and only if $\widetilde{T}$ is selfadjoint.
\end{lemma}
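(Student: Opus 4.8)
The plan is to transport both notions through the intertwining map $Z_A$ of Lemma~\ref{lemma-tilde}, exploiting that $Z_A$ converts the semi-inner product on $\mathcal{H}$ into the genuine inner product on $\textbf{R}(A^{1/2})$. First I would record the identity
\[
(Z_A x, Z_A y) = \langle x, y \rangle_A \qquad (x,y \in \mathcal{H}),
\]
which follows from the definition $(A^{1/2}u, A^{1/2}v) = \langle P_{\overline{R(A)}}u, P_{\overline{R(A)}}v\rangle$ together with $A^{1/2}P_{\overline{R(A)}} = A^{1/2}$: writing $Z_A x = Ax = A^{1/2}(A^{1/2}x)$ and using $A^{1/2}x \in \overline{R(A)}$, the right-hand side collapses to $\langle A^{1/2}x, A^{1/2}y\rangle = \langle x, y\rangle_A$. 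Combining this with $Z_A T = \widetilde{T}Z_A$ gives the working formula
\[
(\widetilde{T}Z_A x, Z_A y) = (Z_A T x, Z_A y) = \langle Tx, y\rangle_A \qquad (x,y \in \mathcal{H}).
\]

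Next I would reduce $A$-self-adjointness to an $A$-symmetry identity. Since $AT = T^*A$ is equivalent, by pairing against an arbitrary $y$, to $\langle Tx, y\rangle_A = \langle ATx, y\rangle = \langle T^*Ax, y\rangle = \langle x, Ty\rangle_A$ for all $x, y \in \mathcal{H}$, the operator $T$ is $A$-self-adjoint precisely when $\langle Tx, y\rangle_A = \langle x, Ty\rangle_A$ for all $x, y$. For the forward implication, assuming this identity and applying the working formula on both sides yields $(\widetilde{T}Z_A x, Z_A y) = (Z_A x, \widetilde{T}Z_A y)$ for all $x, y$. For the reverse implication, self-adjointness of $\widetilde{T}$ gives, for every $x, y \in \mathcal{H}$,
\[
\langle Tx, y\rangle_A = (\widetilde{T}Z_A x, Z_A y) = (Z_A x, \widetilde{T}Z_A y) = \langle x, Ty\rangle_A,
\]
whence $\langle (AT - T^*A)x, y\rangle = 0$ for all $x, y$ and therefore $AT = T^*A$.

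The one point requiring care, and the main obstacle, is passing from an identity that a priori holds only on the range $Z_A(\mathcal{H}) = R(A)$ to self-adjointness of $\widetilde{T}$ on all of $\textbf{R}(A^{1/2})$. This is handled by showing that $R(A)$ is dense in $\textbf{R}(A^{1/2})$: if $w = A^{1/2}\xi \in \textbf{R}(A^{1/2})$ is $(\cdot,\cdot)$-orthogonal to every $Z_A x = Ax$, then $0 = (A^{1/2}\xi, Ax) = \langle A^{1/2}P_{\overline{R(A)}}\xi, x\rangle$ for all $x$, forcing $A^{1/2}P_{\overline{R(A)}}\xi = 0$; since $N(A^{1/2}) = N(A) = \overline{R(A)}^{\perp}$ meets $\overline{R(A)}$ only in $\{0\}$, this gives $P_{\overline{R(A)}}\xi = 0$, whence $w = A^{1/2}\xi = A^{1/2}P_{\overline{R(A)}}\xi = 0$. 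With density in hand, the bilinear identity $(\widetilde{T}u, v) = (u, \widetilde{T}v)$ extends from $u, v \in R(A)$ to all of $\textbf{R}(A^{1/2})$ by continuity, completing both directions. (Alternatively, one can argue through the relation $\widetilde{T^{\#}} = (\widetilde{T})^{*}$, noting that $AT^{\#} = T^*A$ and that $\widetilde{S} = 0$ iff $AS = 0$, so that $\widetilde{T}$ is self-adjoint iff $AT^{\#} = AT$ iff $T^*A = AT$.)
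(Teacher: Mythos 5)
Your proof is correct, but there is nothing in the paper to compare it against: the lemma is quoted directly from \cite[Theorem 4.2]{majumdar2024approximate}, and the paper supplies no internal argument. Your derivation is a legitimate, self-contained proof using only facts available in the paper: the identity $(Z_A x, Z_A y) = \langle x, y\rangle_A$, the intertwining relation $Z_A T = \widetilde{T} Z_A$ of Lemma~\ref{lemma-tilde}, the reformulation of $AT = T^*A$ as the $A$-symmetry identity $\langle Tx,y\rangle_A = \langle x,Ty\rangle_A$, and---the genuine crux---the density of $Z_A(\mathcal{H}) = R(A)$ in $\textbf{R}(A^{1/2})$, which you establish correctly from the projection identities $A^{1/2}P_{\overline{R(A)}} = A^{1/2}$ and $N(A^{1/2}) = N(A) = \overline{R(A)}^{\perp}$. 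You are right to flag that step: without it, the symmetry identity $(\widetilde{T}u,v) = (u,\widetilde{T}v)$ is only known on $R(A)\times R(A)$, and several treatments in the literature gloss over exactly this point. Incidentally, your main argument never uses the full hypothesis $T \in \mathcal{B}_A(\mathcal{H})$ beyond guaranteeing that $\widetilde{T}$ exists, so it proves the statement for any $T \in \mathcal{B}_{A^{1/2}}(\mathcal{H})$. One caveat: the parenthetical alternative via $\widetilde{T^{\#}} = (\widetilde{T})^{*}$ rests on a relation established neither in this paper nor in your argument, so it should be read as a pointer to the literature rather than a second proof; the main argument is the one that stands on its own.
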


\begin{lemma}\label{rlemma} \cite[Proposition 2.5(ii)]{sen2024note}
Let $T \in \mathcal{B}_{A^{1/2}}(\mathcal{H})$ with $R(A)$ is closed. Then $\sigma_{A_P}(T) =\sigma_p(\widetilde{T})$.  
\end{lemma}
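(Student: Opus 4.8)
The plan is to transport the eigenvalue equations back and forth through the intertwining map $Z_A$ of Lemma \ref{lemma-tilde}, which reads $ATx = \widetilde{T}(Ax)$ for every $x \in \mathcal{H}$. Throughout I will use the orthogonal decomposition $\mathcal{H} = N(A) \oplus \overline{R(A)}$, the identities $AP_{\overline{R(A)}} = A$ and $T(N(A)) \subseteq N(A)$, together with the elementary observation that for any $x \in \mathcal{H}$ one has $\|Ax\|_{\textbf{R}(A^{1/2})} = \|x\|_A$: indeed $Ax = A^{1/2}(A^{1/2}x)$ with $A^{1/2}x \in R(A^{1/2}) \subseteq \overline{R(A)}$, so the defining formula $(A^{1/2}u, A^{1/2}u) = \|P_{\overline{R(A)}}u\|^2$ collapses to $\|A^{1/2}x\|^2 = \langle Ax, x \rangle = \|x\|_A^2$.

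For the inclusion $\sigma_{A_p}(T) \subseteq \sigma_p(\widetilde{T})$, take $\lambda \in \sigma_{A_p}(T)$, so there is $x \in \overline{R(A)}$, $x \neq 0$, with $ATx = \lambda Ax$. Applying the intertwining relation gives $\widetilde{T}(Ax) = \lambda (Ax)$, and $Ax \neq 0$ in $\textbf{R}(A^{1/2})$ because $\|Ax\|_{\textbf{R}(A^{1/2})} = \|x\|_A$, which is nonzero since $x \in \overline{R(A)} \setminus \{0\}$ and $\overline{R(A)} \cap N(A) = \{0\}$. Hence $\lambda \in \sigma_p(\widetilde{T})$. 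It is worth recording that this direction does not use the closedness of $R(A)$.

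For the reverse inclusion, let $\lambda \in \sigma_p(\widetilde{T})$ with eigenvector $\xi \in \textbf{R}(A^{1/2}) = R(A^{1/2})$, $\xi \neq 0$. This is the only place the hypothesis enters: since $R(A)$ is closed, $R(A) = R(A^{1/2})$, so $\xi = Av$ for some $v \in \mathcal{H}$, and $\widetilde{T}\xi = \lambda \xi$ becomes $ATv = \lambda Av$ via the intertwining relation. Setting $x = P_{\overline{R(A)}}v$, write $v = x + w$ with $w \in N(A)$; then $Ax = Av = \xi \neq 0$ (so $x \neq 0$), while $T(N(A)) \subseteq N(A)$ forces $ATw = 0$, whence $ATx = ATv = \lambda Av = \lambda Ax$. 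Thus $\lambda \in \sigma_{A_p}(T)$.

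The only genuine obstacle is the reverse inclusion: an eigenvector of $\widetilde{T}$ a priori lives in $R(A^{1/2})$, not in $R(A)$, so without closedness one cannot realize it as $Av$ and pull the equation back to $\mathcal{H}$; the equality $R(A) = R(A^{1/2})$ for closed range is exactly what removes this gap. The remaining subtlety is cosmetic, namely replacing $v$ by its projection $P_{\overline{R(A)}}v$ and discarding the $N(A)$-component using the invariance $T(N(A)) \subseteq N(A)$, and requires no further hypotheses.
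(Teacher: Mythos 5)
Your proof is correct. Note, however, that the paper does not prove this statement at all: it is imported verbatim from \cite[Proposition 2.5(ii)]{sen2024note}, so there is no in-paper argument to compare against; what you have written is a self-contained substitute for that citation. Your argument uses only facts already recorded in the paper: the intertwining relation $ATx=\widetilde{T}(Ax)$ from Lemma \ref{lemma-tilde}, the alternative description of $\sigma_{A_p}(T)$ via $ATx=\lambda Ax$ with $x\in\overline{R(A)}\setminus\{0\}$, the invariance $T(N(A))\subseteq N(A)$ for $T\in\mathcal{B}_{A^{1/2}}(\mathcal{H})$, the identity $AP_{\overline{R(A)}}=A$, and the equality $R(A)=R(A^{1/2})$ when $R(A)$ is closed (which the paper itself invokes before Corollary \ref{eqcor}). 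Both inclusions check out: the forward one needs only the injectivity of $A$ on $\overline{R(A)}$ (your norm identity $\|Ax\|_{\mathbf{R}(A^{1/2})}=\|x\|_A$ is verified correctly and makes this quantitative), and the reverse one correctly isolates closedness of $R(A)$ as the hypothesis that lets an eigenvector $\xi\in R(A^{1/2})$ of $\widetilde{T}$ be written as $Av$, after which projecting $v$ onto $\overline{R(A)}$ and discarding the $N(A)$-component via $ATw=0$ is exactly the right bookkeeping. Your closing remark about where closedness is genuinely needed (and that the forward inclusion holds without it) is a worthwhile observation that the paper's citation obscures.
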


Now we are ready to prove the next result which establishes that the $A$-$q$-numerical range of an $n \times n$ $A$-self-adjoint matrix is an elliptic disk.
\begin{theorem}
 Let $|q| \le 1$ and $T$ be an $n \times n$ $A$-self-adjoint matrix. Then $T$ has $\lambda_1 \ge \lambda_2,... \ge \lambda_m$ $A$-eigenvalues, where $\dim(R(A^\frac{1}{2})=m \le n$ and $W_{q,A}(T)$ is the closed elliptic disk with foci $q \lambda_1$ and $q \lambda_m$ and minor axis of length $\sqrt{1-|q|^2}(\lambda_1- \lambda_m).$   
\end{theorem}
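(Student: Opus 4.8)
The plan is to transfer the problem from the semi-Hilbertian space $\mathcal{H}$ to the genuine Hilbert space $\textbf{R}(A^{1/2})$, where the classical theory of the $q$-numerical range of Hermitian matrices applies, and then pull the conclusion back. Since $T$ is an $n\times n$ matrix, $\mathcal{H}=\mathbb{C}^n$ is finite-dimensional and $R(A)$ is automatically closed; hence Corollary \ref{eqcor} provides the exact identity $W_{q,A}(T)=W_q(\widetilde{T})$, where $\widetilde{T}\in\mathcal{B}(\textbf{R}(A^{1/2}))$ is the operator supplied by Lemma \ref{lemma-tilde} and Theorem \ref{tilde}. Because $\dim R(A^{1/2})=m$, the space $\textbf{R}(A^{1/2})$ is isometrically an $m$-dimensional Hilbert space, so $\widetilde{T}$ is an $m\times m$ matrix.

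Next I would establish the two structural facts needed to invoke the scalar result. First, since $T$ is $A$-self-adjoint, Lemma \ref{slemma} guarantees that $\widetilde{T}$ is self-adjoint, i.e. an $m\times m$ Hermitian matrix. Second, by Lemma \ref{rlemma} the $A$-point spectrum of $T$ coincides with the point spectrum of $\widetilde{T}$; in the finite-dimensional setting this identifies the eigenvalues of $\widetilde{T}$ with the $A$-eigenvalues $\lambda_1\ge\lambda_2\ge\cdots\ge\lambda_m$ of $T$, all real by self-adjointness.

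With these identifications the claim reduces to the classical description of the $q$-numerical range of an $m\times m$ Hermitian matrix. By \cite[Theorem 3.4, p.\ 384]{gau2021numerical}, for such a matrix with largest eigenvalue $\lambda_1$ and smallest eigenvalue $\lambda_m$, the set $W_q(\widetilde{T})$ is the closed elliptic disk with foci $q\lambda_1$ and $q\lambda_m$ and minor axis of length $\sqrt{1-|q|^2}\,(\lambda_1-\lambda_m)$; indeed the semi-focal length $\tfrac{|q|}{2}(\lambda_1-\lambda_m)$ and semi-minor length $\tfrac{1}{2}\sqrt{1-|q|^2}\,(\lambda_1-\lambda_m)$ combine to the semi-major axis $\tfrac{1}{2}(\lambda_1-\lambda_m)$, so the description is internally consistent. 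Combining this with the identity $W_{q,A}(T)=W_q(\widetilde{T})$ from the first step yields the stated conclusion.

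The main point requiring care is the bookkeeping of the reduction rather than any hard estimate: one must confirm that $\widetilde{T}$ is genuinely an $m\times m$ (not $n\times n$) Hermitian matrix and that its ordered eigenvalue list is exactly $\lambda_1,\dots,\lambda_m$, so that the foci and axis lengths match the statement. I would also make sure the cited scalar theorem is applied in a form valid for complex $q$ with $|q|\le 1$, the ellipse then being rotated by $\arg q$; if that reference is stated only for $0\le q\le 1$, I would first reduce to this case using the rotational behaviour of $W_{q,A}$ recorded in Proposition 20(i) of \cite{feki2024joint}, exactly as was done at the outset of the proof of Theorem \ref{circf}.
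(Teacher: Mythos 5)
Your proposal is correct and follows essentially the same route as the paper's own proof: transfer to $\widetilde{T}$ on $\textbf{R}(A^{1/2})$, use Lemma \ref{slemma} for self-adjointness of $\widetilde{T}$, Lemma \ref{rlemma} to identify $\sigma_{A_p}(T)$ with $\sigma_p(\widetilde{T})$, the Gau--Wu ellipse theorem for Hermitian matrices, and Corollary \ref{eqcor} to pull the set equality back. Your added care about complex $q$ (reducing to $0\le q\le 1$ via the rotational behaviour of the $q$-numerical range) is a small refinement the paper glosses over, but it does not change the argument.
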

\begin{proof}
Since, $T$ is an $n \times n$ $A$-selfadjoint matrix, Lemma \ref{slemma} implies $\widetilde{T}$ is self adjoint. Also, $\widetilde{T} \in \mathcal{B}(\textbf{R}(A^{1/2}))$ and $\dim(R(A^\frac{1}{2})=m$, thus 
 $\lambda_1 \ge \lambda_2,... \ge \lambda_m$ are $m$ eigenvalues of $\widetilde{T}$. Since $R(A)$ is closed, using Lemma \ref{rlemma}, we have
  $\sigma_{A_P}(T) =\sigma_p(\widetilde{T})$. This implies, $\lambda_1 \ge \lambda_2,... \ge \lambda_m$ are $m$ $A$-eigenvalues of $T$.  Moreover, using Theorem 3.4 \cite[p.384]{gau2021numerical}, we have $W_q(\widetilde{T})$ is the closed elliptic disk with foci $q \lambda_1$ and $q \lambda_m$ and minor axis of length $\sqrt{1-|q|^2}(\lambda_1- \lambda_n).$ The required result follows from Corollary \ref{eqcor}.    
\end{proof}

\section{$A$-nilpotent operator}
In this section we introduce the concept of $A$-nilpotent operator and study its $A$-$q$-numerical range and radius. We start with the definition of $A$-nilpotent operator.
\begin{definition}
 An operator $T \in \mathcal{B}_{A^{1/2}}(\mathcal{H})$ is said to be $A$-nilpotent of index $k \in \mathbb{N}$ if $AT^k=0$ but $AT^{k-1} \ne 0$.  
\end{definition}

By definition it is clear that every nilpotent operator is $A$-nilpotent with possible index $1$. In fact, consider the matrices $T=\begin{pmatrix}
    0 & \ell \\
    0 & 0
\end{pmatrix}$ and $A=\begin{pmatrix}
    0 & 0 \\
    0 & m
\end{pmatrix}$, where $\ell, m$ are real numbers. Then it is easy to verify that $T$ is nilpotent operator of index 2 and $A$-nilpotent operator of index $1$. However, the converse is not necessarily true. For example, consider the matrices $A=\begin{pmatrix}
  1 & 0 & 0 \\
  0 & 0 & 0 \\
  0 & 0 & 0
\end{pmatrix} $ and $T=\begin{pmatrix}
  0 & 1 & 0 \\
  0 & 0 & 0 \\
  0 & 0 & 2
\end{pmatrix} $. We obtain, $T^n \ne 0$ for all $n \in \mathbb{N}$, $AT \ne 0$ but $AT^2=0$. In this case $T$ is not a nilpotent operator but $T$ is $A$-nilpotent operator of index $2$.
\begin{theorem}\label{nil}
     $T \in \mathcal{B}_{A^{1/2}}(\mathcal{H})$ is $ A$-nilpotent operator of index $k$ if and only if $\widetilde{T}$ is nilpotent operator of index $k$. 
\end{theorem}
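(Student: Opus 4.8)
The plan is to exploit the operator $\widetilde{T}$ introduced in Lemma \ref{lemma-tilde}, which satisfies the intertwining relation $Z_A T = \widetilde{T} Z_A$, and to translate the $A$-nilpotency condition $AT^k = 0$ into a genuine nilpotency statement for $\widetilde{T}$ on the Hilbert space $\textbf{R}(A^{1/2})$. The key observation is that powers of $T$ correspond to powers of $\widetilde{T}$ through $Z_A$: iterating $Z_A T = \widetilde{T} Z_A$ gives $Z_A T^n = \widetilde{T}^n Z_A$ for every $n \in \mathbb{N}$. Since $Z_A x = Ax$ by definition, the condition $AT^k = 0$ should be recast as $Z_A T^k = 0$, and then combined with the intertwining identity to extract information about $\widetilde{T}^k$.

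First I would establish, by an easy induction on $n$ using the defining relation of $\widetilde{T}$, that $Z_A T^n = \widetilde{T}^n Z_A$ for all $n \ge 1$. The base case is Lemma \ref{lemma-tilde} itself, and the inductive step applies the relation once more to $\widetilde{T}^{n-1} Z_A T = \widetilde{T}^{n-1}\widetilde{T} Z_A$. Next I would prove the forward direction: if $AT^k = 0$, then $Z_A T^k = 0$, so $\widetilde{T}^k Z_A = 0$. Because $Z_A$ has dense range in $\textbf{R}(A^{1/2})$ (its range is $R(A)$, dense in $\overline{R(A)} = \textbf{R}(A^{1/2})$ in the appropriate sense) and $\widetilde{T}^k$ is bounded, vanishing on a dense subset forces $\widetilde{T}^k = 0$. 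The analogous argument applied to $k-1$ shows $\widetilde{T}^{k-1} Z_A = Z_A T^{k-1} \neq 0$, hence $\widetilde{T}^{k-1} \neq 0$, so the index is exactly $k$.

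For the converse I would run essentially the same computation in reverse. If $\widetilde{T}^k = 0$, then $Z_A T^k = \widetilde{T}^k Z_A = 0$, which means $A T^k = 0$. Similarly $\widetilde{T}^{k-1} \neq 0$ yields, via density of the range of $Z_A$ again, that $\widetilde{T}^{k-1} Z_A = Z_A T^{k-1} \neq 0$ and therefore $A T^{k-1} \neq 0$. Thus $T$ is $A$-nilpotent of index exactly $k$.

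The main obstacle I anticipate is the careful handling of the equivalence between $Z_A T^k = 0$ and $\widetilde{T}^k = 0$: one must be precise about the density of $R(Z_A)$ in $\textbf{R}(A^{1/2})$ and the uniqueness clause of Lemma \ref{lemma-tilde} to conclude that a bounded operator annihilating $Z_A$ must itself be zero. The uniqueness of $\widetilde{T}$ asserted in Lemma \ref{lemma-tilde} is exactly what licenses the passage from $\widetilde{T}^k Z_A = 0 = 0 \cdot Z_A$ to $\widetilde{T}^k = 0$, since $0$ trivially intertwines $T^k$ with $Z_A$ as well. Once this bridging lemma is stated cleanly, both directions follow by the same symmetric computation, and the only remaining care is to track the two conditions (vanishing at index $k$, non-vanishing at index $k-1$) simultaneously so that the index is pinned down exactly rather than merely bounded.
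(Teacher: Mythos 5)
Your proposal is correct and takes essentially the same route as the paper: both iterate the intertwining relation $Z_A T = \widetilde{T} Z_A$ of Lemma \ref{lemma-tilde} to obtain $Z_A T^n = \widetilde{T}^n Z_A$, translate $AT^k = 0$ into $\widetilde{T}^k Z_A = 0$, invoke the density of $R(A)$ in $\textbf{R}(A^{1/2})$ (equivalently, the uniqueness clause you cite) to conclude $\widetilde{T}^k = 0$, and pin down the exact index by the same contrapositive argument at the $(k-1)$-st power in both directions. The only difference is presentational, in that you state the induction $Z_A T^n = \widetilde{T}^n Z_A$ explicitly where the paper uses it implicitly.
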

\begin{proof}
 Let $T$ is $A$-nilpotent operator of index $k$. Then we have $AT^kx=0$ for all $x \in \mathcal{H}$. Also, by applying Lemma \ref{lemma-tilde}, we obtain $\widetilde{T}^kAx=AT^kx$. Thus, $\widetilde{T}^kAx=0$ for all $x \in \mathcal{H}$. Since $R(A)$ is dense in $\textbf{R}(A^\frac{1}{2})$, we have $\widetilde{T}^k=0$. Also, if $\widetilde{T}^{k-1}=0$ then $AT^{k-1}x=\widetilde{T}^{k-1}Ax=0$ for all $x \in \mathcal{H}$. It follows that $AT^{k-1}=0$, which is a contradiction. Therefore, $\widetilde{T}^{k-1}\ne 0$ and hence $\widetilde{T}$ is nilpotent operator of index $k.$ Conversely, suppose that $\widetilde{T}$ is nilpotent operator of index $k$. Then from Lemma \ref{lemma-tilde}, we have $AT^kx=\widetilde{T}^kAx=0$ for all $x \in \mathcal{H}$, this implies that $AT^k=0$. Again if $AT^{k-1}=0$, we obtain $\widetilde{T}^{k-1}Ax=AT^{k-1}x=0$ for all $x \in \mathcal{H}$. Therefore, $\widetilde{T}^{k-1}=0$, which is not possible. Thus $AT^{k-1} \ne 0$ and this proves the result.       
\end{proof}

In the following lemma we prove some basic results on $A$-nilpotent operators. 
\begin{lemma}
The following properties holds.
\begin{itemize}
    \item [(i)] $T \in \mathcal{B}_{A}(\mathcal{H})$ is $ A$-nilpotent operator of index $k$ if and only if $T^\#$ is $A$-nilpotent operator of index $k$.

 \item [(ii)] Let $T \in \mathcal{B}_{A^{1/2}}(\mathcal{H})$ is $ A$-nilpotent operator of index $k$. Then $\sigma_A(T)=\{0\}$.  
\end{itemize}    
\end{lemma}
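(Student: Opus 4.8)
The plan is to prove both parts by transferring each statement to the associated operator $\widetilde{T}$ on the Hilbert space $\textbf{R}(A^{1/2})$, where the corresponding facts for ordinary nilpotent operators are already available, and then pulling the conclusion back via Lemma \ref{lemma-tilde} and Theorem \ref{nil}. This is the natural strategy because the dictionary between $A$-properties of $T$ and genuine properties of $\widetilde{T}$ is exactly what the previous lemmas were built to supply.

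For part (i), I would argue as follows. Since $T\in\mathcal{B}_A(\mathcal{H})$, the reduced $A$-adjoint $T^\#=A^\dagger T^*A$ exists and lies in $\mathcal{B}_A(\mathcal{H})$, so $\widetilde{T^\#}$ is defined; moreover the map $T\mapsto\widetilde{T}$ is a unital algebra homomorphism and one checks that $\widetilde{T^\#}=(\widetilde{T})^*$, the ordinary Hilbert-space adjoint on $\textbf{R}(A^{1/2})$. Now by Theorem \ref{nil}, $T$ is $A$-nilpotent of index $k$ iff $\widetilde{T}$ is nilpotent of index $k$. But for a bounded operator on a Hilbert space, $\widetilde{T}^{\,k}=0$ iff $(\widetilde{T}^{\,k})^*=(\widetilde{T}^*)^k=0$, and likewise $\widetilde{T}^{\,k-1}\neq 0$ iff $(\widetilde{T}^*)^{k-1}\neq 0$; hence $\widetilde{T}$ is nilpotent of index $k$ iff $\widetilde{T}^*=\widetilde{T^\#}$ is nilpotent of index $k$. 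Applying Theorem \ref{nil} once more (to $T^\#$) converts this back into the statement that $T^\#$ is $A$-nilpotent of index $k$, completing the equivalence.

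For part (ii), I would combine Theorem \ref{nil} with the spectral correspondence between $\sigma_A(T)$ and $\sigma(\widetilde{T})$. Since $T$ is $A$-nilpotent of index $k$, Theorem \ref{nil} gives $\widetilde{T}^{\,k}=0$, so $\widetilde{T}$ is a genuine nilpotent operator and therefore $\sigma(\widetilde{T})=\{0\}$, because any $\lambda\neq 0$ would be invertible as $(\widetilde{T}-\lambda I)$ has inverse given by the finite Neumann-type series $-\lambda^{-1}\sum_{j=0}^{k-1}(\widetilde{T}/\lambda)^j$. It then remains to identify $\sigma_A(T)$ with $\sigma(\widetilde{T})$; this follows from the established fact (cf.\ \cite{baklouti2022spectral}) that $\lambda\mapsto\widetilde{\,\cdot\,}$ intertwines $A$-invertibility of $T-\lambda I$ in $\mathcal{B}_{A^{1/2}}(\mathcal{H})$ with invertibility of $\widetilde{T}-\lambda I$ in $\mathcal{B}(\textbf{R}(A^{1/2}))$, whence $\sigma_A(T)=\sigma(\widetilde{T})=\{0\}$.

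The main obstacle I anticipate is the justification of the identity $\widetilde{T^\#}=(\widetilde{T})^*$ in part (i) and, more delicately, the exact correspondence $\sigma_A(T)=\sigma(\widetilde{T})$ used in part (ii) when $R(A)$ is not assumed closed: Lemma \ref{rlemma} only records $\sigma_{A_p}(T)=\sigma_p(\widetilde{T})$ under closedness, so I would need to invoke the spectral machinery of \cite{baklouti2022spectral} directly rather than relying on the point-spectrum lemma. If such a clean identification is not available in full generality, an alternative is to establish $\sigma_A(T)=\{0\}$ by a direct $A$-invertibility argument: for $\lambda\neq 0$, the operator $S_\lambda=-\lambda^{-1}\sum_{j=0}^{k-1}\lambda^{-j}T^{j}$ lies in $\mathcal{B}_{A^{1/2}}(\mathcal{H})$ and satisfies $A(T-\lambda I)S_\lambda=A S_\lambda(T-\lambda I)=A$ after using $AT^k=0$, so $T-\lambda I$ is $A$-invertible and $\lambda\notin\sigma_A(T)$; that $0\in\sigma_A(T)$ follows since $\sigma_A(T)$ is nonempty (it is compact and nonempty, cf.\ \cite{baklouti2022spectral}). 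This direct route avoids any unproved spectral-mapping claim and is the version I would ultimately write.
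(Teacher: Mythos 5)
Your proposal is correct in its final form, but it takes a genuinely different route from the paper, which works entirely inside $\mathcal{H}$ and never passes to $\widetilde{T}$. For (i), the paper just iterates the identity $AT^\# = T^*A$ to obtain $A(T^\#)^k = (T^*)^k A = (AT^k)^*$, so $AT^k = 0$ iff $A(T^\#)^k = 0$ (and likewise for the index condition); this is a two-line computation requiring no extra machinery. Your route via $\widetilde{T^\#} = (\widetilde{T})^*$ and Theorem \ref{nil} applied twice is valid, but that adjoint identity is nowhere established in the paper --- it is a known fact (it can be found in \cite{feki2020spectral}), so you would have to cite or prove it, which makes your argument heavier than the paper's elementary one. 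For (ii), the paper again avoids $\widetilde{T}$: from $AT^k = 0$ and positivity of $A$ it gets $A^{1/2}T^k = 0$, hence $\|T^n\|_A = 0$ for $n \ge k$, so $r_A(T) = 0$ by \eqref{srf}, and therefore $\sigma_A(T) = \{0\}$. Your hesitation about the identification $\sigma_A(T) = \sigma(\widetilde{T})$ is well placed: the paper never provides it, and Lemma \ref{rlemma} only covers the point spectrum under closedness of $R(A)$, so your first route would need an unstated imported result. Your fallback, however, is fully sound and self-contained: $S_\lambda = -\lambda^{-1}\sum_{j=0}^{k-1}\lambda^{-j}T^j$ lies in the subalgebra $\mathcal{B}_{A^{1/2}}(\mathcal{H})$, and a direct computation gives $(T-\lambda I)S_\lambda = S_\lambda(T-\lambda I) = I - \lambda^{-k}T^k$, so multiplying by $A$ annihilates the error term and shows $\lambda \notin \sigma_A(T)$ for every $\lambda \neq 0$; combined with the nonemptiness of $\sigma_A(T)$ from \cite{baklouti2022spectral}, this yields $\sigma_A(T) = \{0\}$. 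It is worth noting that the paper's inference ``$r_A(T)=0 \Rightarrow \sigma_A(T)=\{0\}$'' also implicitly relies on nonemptiness of $\sigma_A(T)$ together with the fact that $r_A$ dominates the modulus of every point of $\sigma_A(T)$, so both proofs lean on the same background facts from \cite{baklouti2022spectral}; yours buys an explicit formula for the $A$-inverse, while the paper's is shorter.
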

\begin{proof}
Since $T$ is $ A$-nilpotent operator of index $k$, we have $AT^k=0$ but $AT^{k-1} \ne 0$.
\begin{itemize}
\item [(i)]  Now,
\begin{align*}
    A(T^\#)^k= & AT^\#(T^\#)^{k-1}\\
    =& T^*A(T^\#)^{k-1}\quad \text{(as $AT^\#=T^*A$)}\\
    =& (T^*)^k A\\
    =& (AT^k)^*
\end{align*}
Thus, $AT^k=0 \Leftrightarrow A(T^\#)^k=0$. Also, $AT^k \ne 0 \Leftrightarrow A(T^\#)^k \ne 0$. Hence, the required result holds.
\item[(ii)] Using $A$-spectral radius formula of $T$ mentioned in equation \eqref{srf}, we have 
\begin{align*}
    r_A(T)= 
        &  \lim_{ n \to \infty}\|A^\frac{1}{2}T^n\|^\frac{1}{n}
\end{align*}
As $A$ is a positive operator, $AT^k=0$ implies $A^\frac{1}{2}T^k=0$. Thus, $r_A(T)=0$ and hence $\sigma_A(T)=\{0\}$. 
\end{itemize}
\end{proof}

By applying a similar approach to \cite[Theorem 5]{karaev2010numerical} and \cite[Theorem 5.10]{stankovic2024some}, we obtain the following lemma for the $q$-numerical range of square zero operators.
\begin{lemma}\label{lnilpotent}
    	Let $\mathcal{H}$ be a complex Hilbert space, $T \in \mathcal{B}(\mathcal{H})$ and $q \in \mathcal{D}$. If $T$ is a nilpotent operator with index $2$, then the $q$-numerical range of the operator $T$ is a disk (open or closed) with center at $0$ and $w_q(T) \le \left(\frac{1+\sqrt{1-|q|^2}}{2}\right)\|T\|$.
\end{lemma}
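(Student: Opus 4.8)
The plan is to establish the two assertions of Lemma~\ref{lnilpotent} separately, first determining the shape of $W_q(T)$ and then bounding its radius. Since $T$ is nilpotent of index $2$, we have $T^2=0$, so $T$ has a particularly simple structure: on the orthogonal decomposition $\mathcal{H}=\overline{R(T)}\oplus N(T^*)$ (or more precisely using the polar-type structure of square-zero operators), $T$ acts like a weighted shift between two subspaces. The key reduction is that every square-zero operator is unitarily equivalent to an operator of the form $\begin{pmatrix} 0 & D \\ 0 & 0 \end{pmatrix}$ acting on $N(T)^\perp \oplus$ (something), where $D$ is a positive diagonal (or more generally a positive operator) with $\|D\|=\|T\|$. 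I would exploit the rotational invariance $W_q(e^{i\theta}T)=e^{i\theta}W_q(T)$, which for a square-zero operator follows because $e^{i\theta}T$ is unitarily equivalent to $T$ via the unitary $\mathrm{diag}(e^{i\theta/2}I,e^{-i\theta/2}I)$ in the block decomposition; this forces $W_q(T)$ to be rotationally symmetric about $0$ and hence, by convexity of the $q$-numerical range, a disk (open or closed) centered at the origin.

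The main computational work is the radius bound. First I would use the alternative expression for the $q$-numerical radius from Proposition~\ref{exp}(b) (in the Hilbert space case $A=I$), namely
\[
w_q(T)=\sup\left\{|q|\,|\langle Tx,x\rangle|+\sqrt{1-|q|^2}\,|\langle Tx,z\rangle|:\|x\|=\|z\|=1,\ \langle x,z\rangle=0\right\}.
\]
For a square-zero operator, $\langle Tx,x\rangle$ need not vanish in general, so I would combine the two inner-product terms carefully. Writing $T$ in its $2\times 2$ block form and normalizing so $\|T\|=1$, one reduces the supremum to an optimization over finitely many scalar parameters describing the components of $x$ and $z$ in the two summands. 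The worst case should occur on a two-dimensional (or effectively $2\times 2$) reduction, where $T=\begin{pmatrix} 0 & 1 \\ 0 & 0\end{pmatrix}$, and the bound $\frac{1+\sqrt{1-|q|^2}}{2}$ emerges from maximizing $|q|\,|\langle Tx,x\rangle|+\sqrt{1-|q|^2}\,|\langle Tx,z\rangle|$ subject to the normalization and orthogonality constraints.

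The hard part will be justifying the reduction to the $2\times2$ model and then carrying out the constrained maximization to extract precisely the constant $\tfrac{1+\sqrt{1-|q|^2}}{2}$ rather than a weaker bound. For the reduction, I would argue that since $T^2=0$ implies $R(T)\subseteq N(T)$, the relevant action of $T$ against a pair $(x,z)$ only involves the compression of $T$ to $\mathrm{span}\{x,z\}$ together with $R(T)$, and standard arguments (as in Karaev's treatment of numerical ranges of square-zero operators in \cite{karaev2010numerical}) let one pass to a two-dimensional subspace on which $T$ is a scalar multiple of the canonical nilpotent Jordan block. For the maximization itself, after normalizing and parametrizing, I expect the Lagrange/elementary-calculus computation to yield a function of one real variable (an angle) whose maximum is attained at an interior critical point, giving the stated constant; the appearance of $\sqrt{1-|q|^2}$ tracks the orthogonality constraint $\langle x,z\rangle=0$ and the factor $\tfrac12$ tracks the classical fact that $w(T)=\tfrac12\|T\|$ for the Jordan block. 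Closing the argument requires confirming that the disk is closed exactly when the supremum is attained (finite-dimensional or norm-attaining case) and open otherwise, which I would handle by noting whether the extremizing vectors exist in $\mathcal{H}$.
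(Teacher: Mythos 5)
Your treatment of the shape of $W_q(T)$ is correct and is essentially the paper's own argument: write $T=\begin{pmatrix} 0 & S\\ 0 & 0\end{pmatrix}$ on $\mathcal{H}_1\oplus\mathcal{H}_2$, note that a diagonal block unitary intertwines $T$ with $e^{i\theta}T$, so $W_q(T)$ is rotation-invariant, and invoke convexity to get a disk centered at $0$ (the paper uses $U=\mathrm{diag}(I,e^{it}I)$; your $\mathrm{diag}(e^{i\theta/2}I,e^{-i\theta/2}I)$ does the same job).

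The radius bound, however, contains a genuine gap, and it sits exactly where you defer the work: the proposed reduction to the $2\times 2$ Jordan block is not valid. For $|q|<1$, a value $\langle Tx,y\rangle$ with $\langle x,y\rangle=q$ does lie in the $q$-numerical range of the compression $P_MT|_M$ with $M=\mathrm{span}\{x,y\}$, but compressions of square-zero operators to two-dimensional subspaces need not be square-zero, let alone multiples of the Jordan block. Concretely, on $\mathbb{C}^3$ with $Te_2=e_1$, $Te_1=Te_3=0$, and $M=\mathrm{span}\{(e_1+e_2)/\sqrt{2},\,e_3\}$, the compression has eigenvalue $\tfrac12$. (What is true is that $K=\mathrm{span}\{x,z,Tx,Tz\}$ is $T$-invariant and $T|_K$ is square-zero, but that only reduces matters to $4\times4$ square-zero matrices and leaves the optimization untouched.) Nor can the optimization in your formula from Proposition \ref{exp}(b) be decoupled: bounding $|\langle Tx,x\rangle|$ and $|\langle Tx,z\rangle|$ by Cauchy--Schwarz separately gives, at $|q|=1/\sqrt2$, a bound of about $0.919\|T\|$, strictly worse than the claimed $\bigl(\tfrac{1+\sqrt{1-|q|^2}}{2}\bigr)\|T\|\approx 0.854\|T\|$; equality in both Cauchy--Schwarz steps is incompatible with $\langle x,z\rangle=0$, and exploiting that incompatibility \emph{is} the content of the inequality. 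The paper sidesteps all of this by working directly with the pair $(x,y)$: writing $x=x_1\oplus x_2$, $y=y_1\oplus y_2$, parametrizing $\|x_1\|=\cos\alpha$, $\|x_2\|=\sin\alpha$, $\|y_1\|=\cos\beta$, $\|y_2\|=\sin\beta$, it gets $|\langle Tx,y\rangle|=|\langle Sx_2,y_1\rangle|\le\|S\|\sin\alpha\cos\beta=\tfrac{\|S\|}{2}\bigl(\sin(\alpha-\beta)+\sin(\alpha+\beta)\bigr)$, and then uses the constraint in the form $|q|\le\cos(\alpha-\beta)$, hence $\sin(\alpha-\beta)\le\sqrt{1-|q|^2}$, to conclude. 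Your route can in fact be completed: from Proposition \ref{exp}(b) one has $w_q(T)=\sup_x\bigl\{|q|\,|\langle Tx,x\rangle|+\sqrt{1-|q|^2}\,(\|Tx\|^2-|\langle Tx,x\rangle|^2)^{1/2}\bigr\}$, and for square-zero $T$ a two-variable constrained maximization (over $a=|\langle Sx_2,x_1\rangle|\le\|Sx_2\|\cos\alpha$ and $r=\|Sx_2\|\le\|S\|\sin\alpha$) yields exactly $\tfrac{1+\sqrt{1-|q|^2}}{2}\|S\|$; but that computation, not the Jordan-block reduction, is the missing heart of the proof.
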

\begin{proof}
 As $T^2=0$ and $T \ne 0,$ we have $N(T) \ne \{0\}$. This implies $0 \in \sigma_p(T)$. Since $q \sigma_p(T)\subseteq W_q(T)$\cite{feki2024joint}, we have $0 \in {W_q(T)}$. To prove $W_q(T)$ is a circular set, let $\lambda \in W_q(T)$. We can write $\lambda= \langle Tx,y \rangle$ where $\|x\|=\|y\|=1$ and $\langle x,y \rangle=q$. Then for any $t \in [0, 2\pi]$, we have
	$$e^{it}\lambda=e^{it}\langle Tx,y \rangle=\langle e^{it} Tx,y \rangle.$$ 
	Also, the square zero operator can be written as $\begin{pmatrix}
		0 & S\\
		0 & 0
	\end{pmatrix} \in \mathcal{B}(\mathcal{H}_1 \oplus \mathcal{H}_2)$ for some operator $S$ where $\mathcal{H}=\mathcal{H}_1 \oplus \mathcal{H}_2$. Taking $ U=\begin{pmatrix}
		I & 0\\
		0 & e^{it}I
	\end{pmatrix}$, where $I$ is the identity operator, we have the relation $$TU=e^{it}UT.$$
	This shows that $T$ and $e^{it}T$ are unitarily equivalent operators. Therefore, by Proposition 3.1(d) \cite[p.380]{gau2021numerical}, we have 
	\begin{equation*}
		W_q(e^{it}T)=W_q(T).
	\end{equation*}
	 Thus, $e^{it}\lambda \in W_q(T)$ for each $t \in [0,2\pi]$. This implies $W_q(T)$ is a circular set with origin as center. From the convexity of the $q$-numerical range we have that $W_q(T)$ is a disk (open or closed) with center at $0$. 
     
     To obtain the upper bound of $\omega_{q}(T),$ let $x=x_1 \oplus x_2, ~y=y_1 \oplus y_2 \in \mathcal{B}(\mathcal{H}_1 \oplus \mathcal{H}_2)$, where \[\|x\|^2=\|x_1\|^2+\|x_2\|^2=1, \ \|y\|^2=\|y_1\|^2+\|y_2\|^2=1, \ \mbox{and}\] 
     \[\langle x,y \rangle =\langle x_1, y_1 \rangle+\langle x_2, y_2 \rangle=q.\] 
     We can consider $\|x_1\|=\cos \alpha$, $\|x_2\|=\sin \alpha$, $\|y_1\|=\cos \beta$ and $\|y_2\|=\sin \beta$ for some $\alpha, \beta \in [0, \frac{\pi}{2}]$. Then
\begin{equation*}
    \langle Tx,y \rangle = \langle \begin{pmatrix}
    0 & S\\
    0 & 0
\end{pmatrix}\begin{pmatrix}
    x_1\\
    x_2
\end{pmatrix}, \begin{pmatrix}
    y_1\\
    y_2
\end{pmatrix}\rangle=\langle S x_2, y_1 \rangle.
\end{equation*}
To find $w_q(T)$, we have to find supremum value of $| \langle Sx_2, y_1 \rangle|$. Using Cauchy-Schwarz inequality, we have 
\begin{align*}
 | \langle Sx_2, y_1 \rangle| \le & \|S\|\|x_2\| \|y_1\|\\
 = &\|S\| \sin \alpha \cos \beta\\
 = &\|S\| \left( \frac{1}{2}\sin (\alpha-\beta)+\frac{1}{2}\sin(\alpha+\beta) \right).   
\end{align*}
Also, 
\begin{equation*}
  |q| =|\langle x_1, y_1 \rangle + \langle x_2, y_2 \rangle| \le  \|x_1\|\|y_1\|+\|x_2\|\|y_2\|=\cos(\alpha-\beta). 
\end{equation*}
This implies, $\sin(\alpha-\beta) \le \sqrt{1-|q|^2}$. We obtain
\begin{align*}
 \sup | \langle S x_2, y_1 \rangle | \le  &\left( \frac{1+\sqrt{1-|q|^2}}{2}\right)\|S\|\\
 = & \left( \frac{1+\sqrt{1-|q|^2}}{2}\right) \|T\|.   
\end{align*}
Therefore, we have
\begin{equation*}
    w_q(T) \le  \left( \frac{1+\sqrt{1-|q|^2}}{2}\right)\|T\|.
\end{equation*}
\end{proof}

\begin{remark}
Here we prove the refinement of the above result in comparison to the existing upper mentioned in \cite[Theorem 2.5]{fakhri2024q} as follows
\begin{equation*}
 w_q(T) \le  \left( 1-\frac{3q^2}{4}+q \sqrt{1-q^2}\right)^\frac{1}{2}\|T\|~\text{where}~q\in[0,1).    
\end{equation*}
For $q \in[0,1)$, from Lemma \ref{lnilpotent} it follows
\begin{equation*}
    w_q(T) \le  \left( \frac{1+\sqrt{1-|q|^2}}{2}\right)\|T\|.
\end{equation*}
The refinement follows from the fact that
\begin{align*}
 \left( \frac{1+\sqrt{1-|q|^2}}{2}\right)^2- \left( 1-\frac{3q^2}{4}+q \sqrt{1-q^2}\right)
 =\frac{q^2-1+\sqrt{1-q^2}(1-2q)}{2}\le 0.
\end{align*}
\end{remark}

 The following theorem describes the structure of the $A$-$q$-numerical range of a $A$-nilpotent operator with index $2$.
\begin{theorem}\label{nilpotent}
    	 If $T \in \mathcal{B}_{A^{1/2}}(\mathcal{H})$ is a $A$-nilpotent operator with index $2$ and $R(A)$ is closed, then the $A$-$q$-numerical range of $T$ is a disk (open or closed) with center at $0$ and $w_{q,A}(T) \le \left(\frac{1+\sqrt{1-|q|^2}}{2}\right)\|T\|_A$.
\end{theorem}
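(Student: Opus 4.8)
The plan is to transfer the entire problem to the genuine Hilbert space $\textbf{R}(A^{1/2})$ through the induced operator $\widetilde{T}$ of Lemma \ref{lemma-tilde}, and then invoke the Hilbert-space result already proved in Lemma \ref{lnilpotent}. The first step is to note that since $T$ is $A$-nilpotent of index $2$, Theorem \ref{nil} guarantees that $\widetilde{T} \in \mathcal{B}(\textbf{R}(A^{1/2}))$ is an honest nilpotent operator of index $2$, i.e. $\widetilde{T}^2 = 0$ with $\widetilde{T} \ne 0$. This is the structural input that lets the Hilbert-space lemma apply verbatim to $\widetilde{T}$.

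Because $R(A)$ is assumed closed, Corollary \ref{eqcor} supplies the crucial identity $W_{q,A}(T) = W_q(\widetilde{T})$. This single equality reduces both claims, the shape of the range and the radius bound, to the corresponding statements about $W_q(\widetilde{T})$, where $\widetilde{T}$ now acts on an ordinary Hilbert space. Applying Lemma \ref{lnilpotent} directly to the index-$2$ nilpotent operator $\widetilde{T}$ then yields that $W_q(\widetilde{T})$ is a disk (open or closed) centered at the origin and that $w_q(\widetilde{T}) \le \left(\frac{1+\sqrt{1-|q|^2}}{2}\right)\|\widetilde{T}\|$. Transporting back through $W_{q,A}(T) = W_q(\widetilde{T})$ immediately shows $W_{q,A}(T)$ is a disk centered at $0$, and since equal sets have equal suprema of moduli, $w_{q,A}(T) = w_q(\widetilde{T})$.

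The one remaining ingredient, and the step I expect to be the only real (if minor) obstacle, is the norm identity $\|\widetilde{T}\| = \|T\|_A$, which converts the inequality $w_q(\widetilde{T}) \le \left(\frac{1+\sqrt{1-|q|^2}}{2}\right)\|\widetilde{T}\|$ into the desired bound $w_{q,A}(T) \le \left(\frac{1+\sqrt{1-|q|^2}}{2}\right)\|T\|_A$. This follows from the lifting correspondence $Z_A T = \widetilde{T} Z_A$ of Lemma \ref{lemma-tilde}: a direct computation with the defining inner product on $\textbf{R}(A^{1/2})$ shows $\|Z_A x\|_{\textbf{R}(A^{1/2})} = \|x\|_A$ for every $x \in \mathcal{H}$, and since $Z_A$ has dense range in $\textbf{R}(A^{1/2})$, the relation $\widetilde{T} Z_A x = Z_A T x$ forces $\|\widetilde{T}\| = \sup_{\|x\|_A = 1}\|Tx\|_A = \|T\|_A$. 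With this identity in hand, both assertions of the theorem follow at once, so the work is entirely in assembling Theorem \ref{nil}, Corollary \ref{eqcor}, and Lemma \ref{lnilpotent} together with the isometric nature of $Z_A$.
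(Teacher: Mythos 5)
Your proposal is correct and follows essentially the same route as the paper's proof: transfer to $\widetilde{T}$ via Theorem \ref{nil}, apply Lemma \ref{lnilpotent}, identify $W_{q,A}(T)$ with $W_q(\widetilde{T})$, and conclude with the norm identity $\|T\|_A = \|\widetilde{T}\|_{\textbf{R}(A^{1/2})}$. If anything, your version is slightly cleaner in its citations: you invoke Corollary \ref{eqcor} (which gives the set equality under the closedness of $R(A)$) and justify the norm identity via the quasi-isometry $Z_A$, whereas the paper cites Theorem \ref{tilde} (which only states an inclusion) for the equality and refers to \cite{feki2020spectral} for the norm identity.
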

\begin{proof}
    As $T$ is $A$-nilpotent operator of index $2$ then it follows from Lemma \ref{nil} that $\widetilde{T}$ is nilpotent operator of index $2$. Thus, by Lemma \ref{lnilpotent}, we have $W_q(\widetilde{T})$ is a disk with center at $0$. Theorem \ref{tilde}(ii) shows that $W_{q,A}(T)$ is a disk with center at $0$. Again, using Lemma \ref{lnilpotent}, we have 
    $
    w_q(\widetilde{T}) \le  \left( \frac{1+\sqrt{1-|q|^2}}{2}\right)\|\widetilde{T}\|_{\textbf{R}(A^\frac{1}{2})}.
$
By applying Theorem \ref{tilde}, we have $ w_q(\widetilde{T})=w_{q,A}(T)$. Using this relation and $\|T\|_A=\| \widetilde{T}\|_{\textbf{R}(A^\frac{1}{2})}$ \cite{feki2020spectral}, we obtain
\begin{equation*}
    w_{q,A}(T) \le  \left( \frac{1+\sqrt{1-|q|^2}}{2}\right)\|T\|_A.
\end{equation*}
\end{proof}
\begin{corollary}
 Let $T \in \mathcal{B}_A(\mathcal{H})$, $AT^2=0$ but $AT \ne 0$. Then $w_A(T)= \frac{\|T\|_A}{2}$.    
\end{corollary}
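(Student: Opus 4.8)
The plan is to reduce this corollary to the index-$2$ result already established in Theorem~\ref{nilpotent}. The hypotheses $AT^2 = 0$ but $AT \neq 0$ say precisely that $T \in \mathcal{B}_A(\mathcal{H})$ is an $A$-nilpotent operator of index $2$. Since $T \in \mathcal{B}_A(\mathcal{H})$, its lifting $\widetilde{T}$ exists by Lemma~\ref{lemma-tilde}, and by Theorem~\ref{nil} the operator $\widetilde{T}$ is nilpotent of index $2$ in the Hilbert space $\textbf{R}(A^{1/2})$. The natural route is to pass to $\widetilde{T}$, compute $w_q(\widetilde{T})$ for the classical $q = 1$ case (i.e.\ the ordinary numerical radius of a square-zero operator), and then transport the answer back using the identity $w_A(T) = w_{q,A}(T)|_{q=1} = w(\widetilde{T})$ together with $\|T\|_A = \|\widetilde{T}\|_{\textbf{R}(A^{1/2})}$.

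First I would invoke Theorem~\ref{nilpotent} (or directly Lemma~\ref{lnilpotent} applied to $\widetilde{T}$) with $q = 1$. Setting $q = 1$ in the bound $w_{q,A}(T) \le \left(\frac{1+\sqrt{1-|q|^2}}{2}\right)\|T\|_A$ gives $\sqrt{1 - |q|^2} = 0$, so the upper bound collapses to $w_A(T) \le \frac{1}{2}\|T\|_A$. This yields one inequality immediately and for free. The remaining work is the reverse inequality $w_A(T) \ge \frac{1}{2}\|T\|_A$, which must be argued separately since the general bound in Lemma~\ref{lnilpotent} is only an upper estimate.

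For the lower bound I would use the $2 \times 2$ block structure from the proof of Lemma~\ref{lnilpotent}: writing $\widetilde{T} = \begin{pmatrix} 0 & S \\ 0 & 0 \end{pmatrix}$ on $\mathcal{H}_1 \oplus \mathcal{H}_2$, one has $\|\widetilde{T}\| = \|S\|$ and $w(\widetilde{T}) = \sup |\langle S x_2, y_1\rangle|$ taken over unit vectors. Choosing $x = \tfrac{1}{\sqrt 2}(x_1 \oplus x_2)$ and $y = \tfrac{1}{\sqrt 2}(y_1 \oplus y_2)$ with $\|x_2\| = \|y_1\| = 1$ and $x_2, y_1$ nearly achieving $\|S\|$ in the relation $\langle S x_2, y_1\rangle \approx \|S\|$, and noting that at $q = 1$ the constraint $\langle x, y\rangle = 1$ forces $x = y$ (the Cauchy--Schwarz equality case), one recovers $w(\widetilde{T}) = \tfrac{1}{2}\|S\| = \tfrac{1}{2}\|\widetilde{T}\|$. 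This is the standard fact that a square-zero operator has numerical radius exactly half its norm. Transporting through $w_A(T) = w(\widetilde{T})$ and $\|T\|_A = \|\widetilde{T}\|_{\textbf{R}(A^{1/2})}$ closes the gap.

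The main obstacle is the lower bound, since it does not follow formally from plugging $q=1$ into the inequality of Theorem~\ref{nilpotent}; one genuinely needs the equality $w(N) = \tfrac12\|N\|$ for square-zero $N$, whose supremum is attained (in the limit) by aligning $x$ and $y$ along the singular directions of $S$. One subtlety to handle carefully is that at $q = 1$ the admissible pairs degenerate to $x = y$ (by Remark~\ref{recon}), so the lower-bound construction must respect this: the vector realizing the supremum must be taken as a single unit vector $x = \tfrac{1}{\sqrt 2}(x_1 \oplus x_2)$ with $\langle S x_2, x_1\rangle$ approaching $\tfrac12\|S\|$, which is exactly the classical optimizer. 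Once the classical identity is cited or reproved, the semi-Hilbertian conclusion is immediate.
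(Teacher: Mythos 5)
Your proposal is correct, and its upper-bound half is exactly the paper's: set $q=1$ in Theorem \ref{nilpotent} to obtain $w_A(T) \le \tfrac{1}{2}\|T\|_A$. The lower bound is where you genuinely diverge. The paper gets $\tfrac{1}{2}\|T\|_A \le w_A(T)$ in one line by setting $q=1$ in Corollary \ref{maincor}(ii), i.e.\ the general inequality $\tfrac{|q|}{2}\|T\|_A \le w_{q,A}(T)$, which holds for every $T \in \mathcal{B}_A(\mathcal{H})$ with no nilpotency, no lifting, and no assumption on $R(A)$. You instead reprove the classical identity $w(N)=\tfrac{1}{2}\|N\|$ for square-zero $N$ by an explicit near-optimal choice of vectors for $\widetilde{T}$ and then transport the equality back to $\mathcal{H}$. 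This is more self-contained and exhibits the extremal vectors, which the paper's citation hides; but it costs an extra step the paper's route avoids: you need $w_A(T) \ge w(\widetilde{T})$, and Theorem \ref{tilde} gives only $W_{q,A}(T) \subseteq W_q(\widetilde{T})$, the inequality in the wrong direction. Your transport identity $w_A(T)=w(\widetilde{T})$ therefore has to be justified by Corollary \ref{eqcor}, which requires $R(A)$ closed --- the hypothesis Theorem \ref{nilpotent} carries anyway, so you are on equal footing with the paper there, but note that the paper's lower bound would survive even without closedness, whereas yours needs it (or a density argument for $R(A)$ in $\textbf{R}(A^{1/2})$ that the paper never proves). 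One small slip: with unit vectors $x_1, x_2$ and $x=\tfrac{1}{\sqrt{2}}(x_1 \oplus x_2)$, you need $\langle Sx_2, x_1\rangle \to \|S\|$, so that $\langle \widetilde{T}x, x\rangle = \tfrac{1}{2}\langle Sx_2, x_1\rangle \to \tfrac{1}{2}\|S\|$; as written in your final paragraph you ask $\langle Sx_2, x_1\rangle$ itself to approach $\tfrac{1}{2}\|S\|$, which would only yield $w(\widetilde{T}) \ge \tfrac{1}{4}\|S\|$.
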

\begin{proof}
By setting $q = 1$ in Theorem~\ref{nilpotent}, we obtain 
\[
    w_A(T) \leq \frac{\|T\|_A}{2}.
\]
On the other hand, taking $q = 1$ in Corollary~\ref{maincor}(ii) yields  
\[
    \frac{\|T\|_A}{2} \leq w_A(T).
\]
Combining the above two inequalities, we conclude that  
\[
    w_A(T) = \frac{\|T\|_A}{2},
\]
and the proof is complete. 
\end{proof}

We have successfully obtained a class of operators for which equality holds in the estimnation of $w_q(T)$ obtained in Lemma \ref{lnilpotent}. This proves that the estimation of $w_q(T)$ obtained in Lemma \ref{lnilpotent} is best possible.
\begin{corollary}\label{sqcor}
 Let $q \in [0,1]$ and $T=\begin{pmatrix}
    0 & S\\
    0 & 0
\end{pmatrix} \in \mathcal{B}(\mathcal{H} \oplus \mathcal{H})$, where $S$ is compact, self-adjoint operator. Then
$
    w_q(T) =  \left( \frac{1+\sqrt{1-q^2}}{2}\right)\|T\|.
$
\end{corollary}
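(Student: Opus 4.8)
The plan is to combine the already-established upper bound with a matching lower bound obtained by exhibiting explicit extremal vectors. Since $T^2 = 0$ and $T \neq 0$, Lemma~\ref{lnilpotent} applied in the Hilbert space $\mathcal{H}\oplus\mathcal{H}$ gives $w_q(T) \le \left(\frac{1+\sqrt{1-q^2}}{2}\right)\|T\|$ at once, and a direct estimate on the off-diagonal block shows $\|T\| = \|S\|$. Hence the whole content of the corollary is the reverse inequality $w_q(T) \ge \left(\frac{1+\sqrt{1-q^2}}{2}\right)\|S\|$, that is, the assertion that the bound of Lemma~\ref{lnilpotent} is \emph{sharp} for this class. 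I would establish it by tracing back through the proof of Lemma~\ref{lnilpotent} and producing a pair $(x,y)$ that turns every inequality appearing there into an equality.

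The key device is that a nonzero compact self-adjoint $S$ attains its norm at an eigenvector: there is a unit vector $u$ with $Su = s u$ and $|s| = \|S\|$ (the case $S=0$ being trivial). I would then seek extremizers lying entirely in $\operatorname{span}\{u\}$, writing $x = (\cos\alpha\, u)\oplus(\sin\alpha\, u)$ and $y = (\cos\beta\, u)\oplus(\sin\beta\, u)$ for angles $\alpha,\beta\in[0,\tfrac{\pi}{2}]$ to be chosen. With this ansatz $\langle Tx,y\rangle = \langle S(\sin\alpha\,u),\cos\beta\,u\rangle = s\,\sin\alpha\cos\beta$ and $\langle x,y\rangle = \cos\alpha\cos\beta + \sin\alpha\sin\beta = \cos(\alpha-\beta)$, so that $\|x\|=\|y\|=1$ holds automatically and the admissibility constraint $\langle x,y\rangle = q$ reduces to $\cos(\alpha-\beta) = q$.

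The decisive step is the choice of angles. Using $\sin\alpha\cos\beta = \tfrac12\bigl(\sin(\alpha+\beta)+\sin(\alpha-\beta)\bigr)$, I would set $\alpha+\beta = \tfrac{\pi}{2}$ and $\alpha-\beta = \arccos q$, both admissible since $\arccos q\in[0,\tfrac{\pi}{2}]$ for $q\in[0,1]$; this forces $\cos(\alpha-\beta)=q$ and yields $|\langle Tx,y\rangle| = \|S\|\cdot\tfrac12\bigl(1+\sqrt{1-q^2}\bigr)$. As this value lies in $W_q(T)$, the lower bound follows, and together with Lemma~\ref{lnilpotent} it gives the claimed equality. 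I expect the only genuine subtlety to be checking that the two optimality conditions $\sin(\alpha+\beta)=1$ and $\sin(\alpha-\beta)=\sqrt{1-q^2}$ are simultaneously compatible with $\cos(\alpha-\beta)=q$ and with $\alpha,\beta\in[0,\tfrac{\pi}{2}]$; it is precisely compactness that supplies the norm-attaining eigenvector $u$, without which the Cauchy--Schwarz step in Lemma~\ref{lnilpotent} could only be approached and not attained.
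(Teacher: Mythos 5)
Your proposal is correct and follows essentially the same route as the paper: both obtain the upper bound from Lemma~\ref{lnilpotent}, then pick a norm-attaining eigenvector $e$ of the compact self-adjoint operator $S$ and use the ansatz $x=((\cos\alpha)e,(\sin\alpha)e)$, $y=((\cos\beta)e,(\sin\beta)e)$ with $\alpha+\beta=\tfrac{\pi}{2}$ and $\alpha-\beta=\arccos q$ to realize the value $\bigl(\tfrac{1+\sqrt{1-q^2}}{2}\bigr)\|S\|$ in $W_q(T)$. The angle choice, trigonometric identity, and the role of compactness in supplying the eigenvector are identical to the paper's argument.
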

\begin{proof}
 From Lemma \ref{nilpotent}, we already have $w_q(T) \le  \left( \frac{1+\sqrt{1-q^2}}{2}\right)\|T\|.$ Since $S$ is compact, self-adjoint operator, then $\|S\|$ or $-\|S\|$ is an eigenvalue of $S$ and let $e$, $(\|e\|=1)$ be the corresponding eigenvector. 
 Let  $x=\begin{pmatrix}
    x_1\\
    x_2
\end{pmatrix}=\begin{pmatrix}
    (\cos\alpha)e\\
    (\sin \alpha) e
\end{pmatrix}$ and $y=\begin{pmatrix}
    y_1\\
    y_2
\end{pmatrix} =\begin{pmatrix}
    (\cos \beta) e\\
    (\sin \beta) e
\end{pmatrix}\in \mathcal{H} \oplus \mathcal{H}$, 
  where $\alpha -\beta =\cos^{-1}q$ and $\alpha +\beta =\frac{\pi}{2}$. This implies, $\sin(\alpha -\beta)=\sqrt{1-q^2}$. Clearly, $\|x\|=\|y\|=1$ and $\langle x,y \rangle=q$. Now,
 \begin{align*}
|\langle Tx, y \rangle |=  & | \langle S x_2, y_1 \rangle |\\
=&\sin \alpha \cos \beta | \langle S e, e \rangle |\\
=&\frac{1}{2}\left( \sin (\alpha+\beta) +\sin (\alpha -\beta) \right) \langle \|S\| e, e \rangle\\
=&\left(\frac{1+\sqrt{1-q^2}}{2}\right)\|S\|\\
=&\left(\frac{1+\sqrt{1-q^2}}{2}\right)\|T\|.
 \end{align*}
This implies, 
 $
     w_q(T) \ge \left(\frac{1+\sqrt{1-q^2}}{2}\right)\|T\|.
$ Thus, the required result holds.
\end{proof}
\begin{remark}
    Take $q=1$ in Corollary \ref{sqcor}, we retrieve the result $w(T)=\frac{\|T\|}{2}$ for square zero operators \cite[Corollary 1]{kittaneh2003numerical}.
\end{remark}

We conclude this section by providing an estimation of the $q$-numerical radius of a class of nilpotent operators. 
\begin{theorem}\label{lthm}
Let $\mathcal{H}=\mathcal{H}_1 \oplus \mathcal{H}_2 \oplus \mathcal{H}_3$, $q \in \mathcal{D}$, and $T=\begin{pmatrix}
    0 & S_1& 0\\
    0 & 0 &S_2\\
    0 & 0 & 0
\end{pmatrix} \in \mathcal{B}(\mathcal{H})$. Then $q$-numerical range of $T$ satisfies the following relation 
\begin{equation*}
    w_q(T) \le 
    \begin{cases} 
        \sqrt{2} \max \{ \|S_1\|, \|S_2\| \}, & \text{if $|q| \in \left[0, \frac{1}{\sqrt{2}}\right] $}, \\
     \left( \frac{\sqrt{2} + |q| + \sqrt{1 - |q|^2}}{2} \right)\max \{ \|S_1\|, \|S_2\| \} , & \text{if $|q| \in \left( \frac{1}{\sqrt{2}}, 1 \right]$}.
    \end{cases}
\end{equation*}
\end{theorem}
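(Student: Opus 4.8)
The plan is to reduce the estimate to a three–variable bilinear optimisation and to extract the $|q|$–dependence from a symmetric/skew splitting of the shift that encodes $T$. By the rotation invariance of the $q$–numerical range \cite[Proposition 20(i)]{feki2024joint} I may assume $q\in[0,1]$. For $(x,y)$ with $\|x\|=\|y\|=1$ and $\langle x,y\rangle=q$, write $x=x_1\oplus x_2\oplus x_3$ and $y=y_1\oplus y_2\oplus y_3$; a direct computation gives $\langle Tx,y\rangle=\langle S_1x_2,y_1\rangle+\langle S_2x_3,y_2\rangle$, whence
\[
|\langle Tx,y\rangle|\le M\bigl(\|x_2\|\,\|y_1\|+\|x_3\|\,\|y_2\|\bigr),\qquad M:=\max\{\|S_1\|,\|S_2\|\}.
\]
Putting $a_i=\|x_i\|$ and $b_i=\|y_i\|$, I obtain nonnegative unit vectors $a,b\in\mathbb R^3$ for which $|\langle Tx,y\rangle|\le M\,a^\top N b$, where $N$ is the $3\times3$ lower shift, so that $a^\top N b=a_2b_1+a_3b_2$. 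The only extra information is the constraint $q=|\langle x,y\rangle|\le\sum_i\|x_i\|\,\|y_i\|=\langle a,b\rangle$, i.e. $\langle a,b\rangle\ge q$. Thus it suffices to bound $f(a,b)=a^\top N b$ over nonnegative unit vectors with $\langle a,b\rangle\ge q$.

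The heart of the argument is to split $N=N_s+N_a$ into its symmetric part $N_s=\tfrac12(N+N^\top)$ and skew part $N_a=\tfrac12(N-N^\top)$ and to bound the two pieces by different mechanisms. The symmetric piece is handled by its operator norm: the tridiagonal matrix $N+N^\top$ has eigenvalues $0,\pm\sqrt2$, so $\|N_s\|=1/\sqrt2$ and $a^\top N_s b\le 1/\sqrt2$, with no use of the constraint. The skew piece is where $|q|$ enters. Since $N_a$ is skew, $a^\top N_a a=0$; writing $b=\langle a,b\rangle\,a+r$ with $r\perp a$ and $\|r\|^2=1-\langle a,b\rangle^2\le 1-q^2$, I get $a^\top N_a b=a^\top N_a r$, and since the singular values of $N_a$ are $0,1/\sqrt2,1/\sqrt2$ this gives $|a^\top N_a b|\le\|N_a\|\,\|r\|\le\sqrt{1-q^2}/\sqrt2$.

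Combining the two estimates yields the single clean bound $f(a,b)\le\frac{1+\sqrt{1-q^2}}{\sqrt2}$, hence $w_q(T)\le\frac{1+\sqrt{1-q^2}}{\sqrt2}\,M$. It then remains only to check that this uniform bound does not exceed either branch claimed. For $|q|\le1/\sqrt2$ one has $\frac{1+\sqrt{1-q^2}}{\sqrt2}\le\sqrt2$ (the left side is decreasing in $|q|$ with value $\sqrt2$ at $q=0$), and for $|q|>1/\sqrt2$ a short algebraic comparison gives $\frac{1+\sqrt{1-q^2}}{\sqrt2}\le\frac{\sqrt2+|q|+\sqrt{1-|q|^2}}{2}$. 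The two branches of the stated bound themselves agree at $|q|=1/\sqrt2$, where $\frac{\sqrt2+|q|+\sqrt{1-|q|^2}}{2}=\sqrt2$, equivalently $2|q|\sqrt{1-|q|^2}=1$. If one prefers to avoid the constraint entirely in the first range, the elementary inequality $a_2b_1+a_3b_2\le\sqrt2\,((a_2b_1)^2+(a_3b_2)^2)^{1/2}\le\sqrt2$ already delivers the branch $\sqrt2\,M$ for $|q|\le1/\sqrt2$.

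The main obstacle is the skew estimate, which is exactly what produces the improvement over the trivial bound $\sqrt2\,M$: one must correctly pass from the operator inequality to the scalar problem (the reduction to real nonnegative coordinate vectors and to the constraint $\langle a,b\rangle\ge q$), and then exploit $a^\top N_a a=0$ together with the tilt $b=\langle a,b\rangle\,a+r$ to convert the lower bound $\langle a,b\rangle\ge q$ into the upper bound $\|r\|\le\sqrt{1-q^2}$. Verifying the operator norms $\|N_s\|=\|N_a\|=1/\sqrt2$ is routine, and the final comparison with the stated piecewise function is elementary; the conceptual step is recognising that the symmetric part carries the constant $1/\sqrt2$, while only the skew part feels the angle between $a$ and $b$.
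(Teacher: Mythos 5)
Your proof is correct, and it takes a genuinely different route from the paper's. Both arguments start the same way: blockwise Cauchy--Schwarz reduces everything to bounding $a_2b_1+a_3b_2$ over nonnegative unit vectors $a,b\in\mathbb{R}^3$ subject to $\langle a,b\rangle\ge q$ (with $q\in[0,1]$ after rotation). From there, however, the paper parametrizes $a$ and $b$ by spherical coordinates, rewrites the objective with product-to-sum identities, and is led to optimizing $\cos\psi-\sin\psi$ over $\psi\in\left[-\cos^{-1}|q|,\cos^{-1}|q|\right]$; the two branches of the statement arise precisely from whether $\psi=-\tfrac{\pi}{4}$ is feasible. You instead split the lower shift $N$ into its symmetric and skew parts, verify $\|N_s\|=\|N_a\|=\tfrac{1}{\sqrt2}$, and let only the skew part feel the constraint via $a^\top N_a a=0$ and $b=\langle a,b\rangle a+r$ with $\|r\|\le\sqrt{1-q^2}$. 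This yields the single casework-free bound $w_q(T)\le\frac{1+\sqrt{1-|q|^2}}{\sqrt2}\,M$, which is strictly sharper than the paper's piecewise bound for all $q\neq0$ (they agree at $q=0$) and is tight at $|q|=1$, where it gives $M/\sqrt2$, matching $w(T)=\cos(\pi/4)$ for the $3\times3$ Jordan block, versus the paper's $\frac{1+\sqrt2}{2}M$. Your closing comparisons are also right: $\frac{1+\sqrt{1-q^2}}{\sqrt2}\le\sqrt2$ holds for all $q$, and for $|q|>\tfrac{1}{\sqrt2}$ the inequality reduces to $(\sqrt2-1)\sqrt{1-q^2}\le q$, which is immediate since there $\sqrt{1-q^2}<q$; hence your bound implies both branches of the theorem. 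What the paper's computation buys is only the specific piecewise expression in the statement; what yours buys is a cleaner argument and a better constant. (One side remark applicable to both proofs: since $\|Tx\|^2=\|S_1x_2\|^2+\|S_2x_3\|^2\le M^2\|x\|^2$, the trivial estimate $w_q(T)\le\|T\|\le M$ already lies below both stated branches, so the real content of such bounds is the $q$-dependence, which your estimate captures more accurately.)
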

\begin{proof}
Consider $x=\begin{pmatrix}
    x_1\\
    x_2\\
    x_3
\end{pmatrix}$ and $y=\begin{pmatrix}
    y_1\\
    y_2\\
    y_3
\end{pmatrix}$ in $\mathcal{H}$ such that $\|x\|=\|y\|=1$ and $\langle x,y \rangle=q$. Using the spherical coordinate system, we can take $\|x_1\|=\cos\theta_1 \sin \phi_1, ~ \|x_2\|=\sin\theta_1 \sin\phi_1, ~ \|x_3\|=\cos \phi_1$ and $\|y_1\|=\cos\theta_2 \sin \phi_2, ~ \|y_2\|=\sin\theta_2 \sin\phi_2, ~ \|y_3\|=\cos \phi_2$ where $\phi_1, \phi_2 \in [0, \frac{\pi}{2}]
$ and $\theta_1, \theta_2 \in [0, \frac{\pi}{2}]$. Using Cauchy-Schwarz inequality, we have 
\begin{align*}
 |q|=&|\langle x,y \rangle| \le\|x_1\|\|y_1\|+\|x_2\|\|y_2\|+\|x_3\|\|y_3\|\\
 =& \cos\theta_1 \sin \phi_1 \cos\theta_2 \sin \phi_2+ \sin\theta_1 \sin\phi_1 \sin\theta_2 \sin\phi_2 + \cos \phi_1 \cos\phi_2 \\
 =& \sin \phi_1 \sin \phi_2 \cos(\theta_1-\theta_2)+ \cos\phi_1 \cos\phi_2\\
 \le & \sin \phi_1 \sin \phi_2+ \cos\phi_1 \cos\phi_2\\
 =& \cos(\phi_1-\phi_2).
\end{align*}
This implies, $\sin(\phi_1-\phi_2) \le \sqrt{1-|q|^2}$.
We have
\begin{align*}
& \left\langle \begin{pmatrix}
     0 & S_1 & 0 \\
     0 & 0 & S_2 \\
     0 & 0 & 0
 \end{pmatrix} \begin{pmatrix}
     x_1 \\
     x_2\\
     x_3
 \end{pmatrix}, \begin{pmatrix}
     y_1 \\
     y_2\\
     y_3
 \end{pmatrix}\right\rangle\\
 =&\langle S_1 x_2,y_1 \rangle + \langle S_2 x_3, y_2 \rangle\\ 
 \le & \max \{ \|S_1\|, \|S_2\| \}(\|x_2\|\|y_1\|+\|x_3\|\|y_2\|)  ~~ \quad \text{ (Cauchy-Schwarz inequality)}\\
\le & \max \{ \|S_1\|, \|S_2\| \}(\sin\theta_1 \sin\phi_1\cos\theta_2 \sin \phi_2+\cos \phi_1\sin\theta_2 \sin\phi_2) \\
\le & \max \{ \|S_1\|, \|S_2\| \} \left( \sin \phi_1 \sin \phi_2 +\cos \phi_1 \sin \phi_2 \right)\\
 =&\frac{\max \{ \|S_1\|, \|S_2\| \}}{2}\left(\cos(\phi_1-\phi_2)-\cos(\phi_1+\phi_2)+\sin(\phi_1+\phi_2)-\sin(\phi_1-\phi_2)\right)\\
=&\frac{\max \{ \|S_1\|, \|S_2\| \}}{2}\left(\sin(\phi_1+\phi_2) -\cos(\phi_1+\phi_2)+\cos(\phi_1-\phi_2)-\sin(\phi_1-\phi_2)\right)\\
\le&\frac{\max \{ \|S_1\|, \|S_2\| \}}{2}\left( \sqrt{2} +\cos(\phi_1-\phi_2)-\sin(\phi_1-\phi_2)\right)
\end{align*}
Let $\psi=\phi_1-\phi_2$. Now the problem reduces to finding the supremum of $\cos\psi-\sin\psi$ with $|q| \le \cos \psi$ and $\psi \in [\frac{-\pi}{2}, \frac{\pi}{2}].$ Thus, $\psi \in \left[ -\cos^{-1}(|q|), \cos^{-1}(|q|)\right]=S (\text{say})$.
It is evident that $\cos \psi -\sin\psi$ attains its supremum value at $\psi=-\frac{\pi}{4}$, which is $\sqrt{2}$. Next we consider two cases:

\textit{Case I: $|q| \in \left[0, \frac{1}{\sqrt{2}}\right]$.}

In this case $-\frac{\pi}{4}\in S$. Thus, $\sup_\psi \{ \cos(\psi)-\sin(\psi)\}=\sqrt{2}$. Therefore,
\begin{equation*}
   \left\langle \begin{pmatrix}
     0 & S_1 & 0 \\
     0 & 0 & S_2 \\
     0 & 0 & 0
 \end{pmatrix} \begin{pmatrix}
     x_1 \\
     x_2\\
     x_3
 \end{pmatrix}, \begin{pmatrix}
     y_1 \\
     y_2\\
     y_3
 \end{pmatrix}\right\rangle \le \sqrt{2}\max \{ \|S_1\|, \|S_2\| \}    
\end{equation*}
and 
\begin{equation*}
w_q(T) \le \sqrt{2}\max \{ \|S_1\|, \|S_2\| \}.   
\end{equation*}
\textit{Case II: $|q| \in \left(\frac{1}{\sqrt{2}},1\right]$.}

In this case $-\frac{\pi}{4}\notin S$. Clearly, $\cos\psi-\sin\psi$ is a decreasing function, so it attains a supremum at the initial point. Therefore,
\begin{align*}
  \sup_\psi \{ \cos\psi-\sin\psi\}
  =\cos (-\cos^{-1}(|q|)-\sin (-\cos^{-1}(|q|)
  =|q|+\sqrt{1-|q|^2}.
\end{align*}
Thus, 
\begin{equation*}
  \sup_\psi \{|\langle S_1 x_2,y_1 \rangle + \langle S_2 x_3, y_2 \rangle|\} 
 \le \left(\frac{\sqrt{2}+|q|+\sqrt{1-|q|^2}}{2} \right)\max \{ \|S_1\|, \|S_2\| \} ~\text{and}
 \end{equation*}
 \begin{equation*}
  w_q(T) \le \left(\frac{\sqrt{2}+|q|+\sqrt{1-|q|^2}}{2} \right)\max \{ \|S_1\|, \|S_2\| \}. 
\end{equation*}
Hence, the required result holds.
\end{proof}
Since the bounds for the classical numerical radius of a nilpotent operator of index three have not been investigated in the existing literature, in the following corollary, we obtain such bounds.
\begin{corollary}
 Let $q=1$ and $T=\begin{pmatrix}
    0 & S_1& 0\\
    0 & 0 &S_2\\
    0 & 0 & 0
\end{pmatrix} \in \mathcal{B}(\mathcal{H})$, it follows from Theorem \ref{lthm} that
\begin{equation*}
    w(T) \le \frac{1+\sqrt{2}}{2}\max \{ \|S_1\|, \|S_2\| \}.
\end{equation*}
\end{corollary}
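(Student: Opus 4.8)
The plan is to obtain this corollary as an immediate specialization of Theorem~\ref{lthm} to the value $q = 1$, so that no fresh estimation is required. First I would observe that $q = 1$ forces $|q| = 1$, which lies in the interval $\left(\tfrac{1}{\sqrt{2}}, 1\right]$; hence the governing bound is the second branch of the piecewise inequality in Theorem~\ref{lthm}, namely
\[
w_q(T) \le \left( \frac{\sqrt{2} + |q| + \sqrt{1 - |q|^2}}{2} \right)\max \{ \|S_1\|, \|S_2\| \}.
\]

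Next I would evaluate the coefficient at $|q| = 1$. Since $\sqrt{1 - |q|^2} = \sqrt{1 - 1} = 0$, the bracketed factor collapses to $\frac{\sqrt{2} + 1 + 0}{2} = \frac{1 + \sqrt{2}}{2}$, which gives $w_1(T) \le \frac{1+\sqrt{2}}{2}\max\{\|S_1\|, \|S_2\|\}$.

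Finally I would invoke the identification, recorded in the introduction, that the $q$-numerical range reduces to the classical numerical range when $q = 1$ (in the $A = I$, $q = 1$ case $W_1(T)$ coincides with $W(T)$), so that $w_1(T) = w(T)$. Substituting this into the previous line yields the claimed inequality. There is essentially no obstacle in this argument: the only substantive content resides in Theorem~\ref{lthm} itself, and the corollary is a one-line numerical specialization combined with the standard reduction $w_1(T) = w(T)$.
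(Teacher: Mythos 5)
Your proposal is correct and is exactly the paper's argument: the corollary is stated as a direct specialization of Theorem~\ref{lthm}, and plugging $|q|=1$ into the second branch gives $\frac{\sqrt{2}+1+0}{2}=\frac{1+\sqrt{2}}{2}$, with $w_1(T)=w(T)$ by the standard identification of the $q$-numerical range at $q=1$ with the classical numerical range. No gap; nothing further is needed.
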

\begin{remark}
    If we take \( q \in \mathcal{D} \) and 
$
    T = \begin{pmatrix}
        0 & S_1 & 0 \\
        0 & 0 & S_2 \\
        0 & 0 & 0
    \end{pmatrix},
$
    where \( S_1, S_2 \in \mathcal{B}_{A^{1/2}}(\mathcal{H}) \) and 
$
    \mathbb{A} = \begin{pmatrix}
        A & 0 & 0 \\
        0 & A & 0 \\
        0 & 0 & A
    \end{pmatrix} \in \mathcal{B}(\mathcal{H} \oplus \mathcal{H} \oplus \mathcal{H})^{+},
$
    then Theorem \ref{lthm} also holds in the setting of semi-Hilbertian space.
\end{remark}

\end{document}